\renewcommand\section{\@startsection
{section}{1}{0mm}%
{-2\bigskipamount}%
{\bigskipamount}%
{\normalfont\normalsize\bfseries}%
}
\newcommand\dateymd{\number\year, \ifcase\month\or
January\or February\or March\or April\or May\or June\or
July\or August\or September\or October\or November\or
December\fi, \number\day}
\newcommand\printtime{%
\c@hours=\time \divide\c@hours by60
\c@minutes=\c@hours \multiply\c@minutes by-60
\advance \c@minutes by \time
\ifnum\c@hours<10 0\fi\the\c@hours:%
\ifnum\c@minutes<10 0\fi\the\c@minutes}
\newcommand\skl[1]{{\,\sf#1\,}}
\newcommand\labelcell[1]{\cellcolor[gray]{0.8}\makebox[.9em][c]{\skl{#1}}}
\newcommand\hlinestrut{\hline\rule{0pt}{2.5ex}}
\newcommand\ie{i.\,e.~}
\newcommand\ifoi{\,\hbox{if\kern2.5pt and\kern2.5pt only\kern2.5pt if}\,{} }
\newcommand\df{\bfseries}
\newcommand\dfc[1]{\,{\df#1}\,}
\newcommand\dfd[1]{\,{\df#1}\hskip1pt}
\newcommand\secpar[1]{\S\,{#1}}
\newcommand\ensep{\unskip\hskip.65em\ignorespaces}
\newcommand\atilde{\lower3.5pt\hbox{\~{}}}
\newcommand\underl{\lower3.5pt\hbox{-}}
\newcommand\remarks{\bigskip\noindent\textit{Remarks}\par}
\newcommand\remark{\medskip\noindent\textit{Remark}.\hskip.5em}
\newcommand\pq[2]{\raise.25ex\hbox{\footnotesize${#1}\over{#2}$}%
\hskip-.35ex\null}
\newcommand\onehalf{\leavevmode\raise.5ex\hbox{\scriptsize$1$}\hskip-.33ex/\hskip-.3ex\lower.2ex\hbox{\scriptsize$2$}}
\newcommand\pqbis[2]{\leavevmode\raise.5ex\hbox{\scriptsize$#1$}\hskip-.33ex/\hskip-.3ex\lower.2ex\hbox{\scriptsize$#2$}}
\newcommand\halfsmallskip{\vskip0.5\smallskipamount}
\newcommand\xxxx[1]{%
 \hangindent2.5\parindent
 \hangafter1
 \noindent\hskip.5\parindent
 \hbox to2\parindent{\hss#1\hss}}
\newcommand\iim[1]{\xxxx{\textup{(#1)}}\ignorespaces}
\newcommand\ddd[1]{\halfsmallskip\vskip-2pt\noindent\hbox to 2\parindent{\hss\footnotesize$\bullet$\ \ }{#1}\ensep}
\newcommand\brwrap[1]{[\textsl{#1}\kern1pt]}
\newcommand\bibref[1]{\@nameuse{b@#1}}
\renewcommand\@biblabel[1]{\brwrap{#1}}
\renewcommand\@cite[2]{\hbox{\brwrap{#1\if@tempswa\/\upshape\,:\,{\relscale{0.95}#2}\fi}}} 
\newcommand*\dbibref[2]{\bibref{#1}\,\textup{:\,{\relscale{0.95}#2}}}
\newcommand*\refco{\/\kern.1ex\textup{,}\hskip.45ex}
\newcommand*\refsc{\/\kern.15ex\textup{;} }
\newcommand\latop[2]{{#1\atop#2}}
\newcommand\sbset{\subset}
\newcommand\sbseteq{\subseteq}
\newcommand\spseteq{\supseteq}
\newcommand\cd[1]{\!#1\!}
\newcommand\isc{v^\ast}
\newcommand\rxi{\mathrel{\smash{\succ\kern-1.7ex\raise1.15ex\hbox{\mathsurround0pt$\scriptscriptstyle\xi$}\kern.4ex}}}
\newcommand\rxieq{\mathrel{\smash{%
 \vbox{\offinterlineskip\halign{\hfil##\hfil\cr
 \mathsurround0pt$\succ$\cr
 \noalign{\vskip-.5ex}%
 \mathsurround0pt$-$\cr
 \noalign{\vskip-1.15ex}%
 }\vss}\kern-1.05ex\raise1.15ex\hbox{\mathsurround0pt$\scriptscriptstyle\xi$}\kern.4ex}}}
\newcommand\psc{v^\pi}
\newcommand\pto{t^\pi}
\newcommand\scf{f} 
\newcommand\bbr{\mathbb{R}}
\newcommand\flr{\varphi}
\newcommand\phiset{Q}
\newcommand\phisetb{\hbox to1.75ex{\hss\hskip2pt$\overline{\hbox to1.5ex{\hss$Q$\hskip2pt\hss}}$\hss}}
\newcommand\lub{\hbox to1.75ex{\hss\hskip2pt$\overline{\hbox to1.5ex{\hss$F$\hskip2pt\hss}}$\hss}}
\newcommand\flrn{\flr^n}
\newcommand\ist{A}
\newcommand\xst{X}
\newcommand\yst{Y}
\newcommand\zst{Z}
\newcommand\wst{W}
\newcommand\rhobis{\widetilde\rho}
\newcommand\cst{C}
\newcommand\arank[1]{\bar r_{#1}}
\newcommand\rlr[1]{R_{#1}}
\newcommand\vbis{\widetilde v}
\newcommand\tbis{\widetilde t}
\newcommand\pscbis{\widetilde v{}^{\kern.75pt\pi}}
\newcommand\xibis{\smash{\widetilde{\hbox{\rule{0pt}{1.5ex}\smash{$\xi$}}}}}
\newcommand\tref[1]{\smash{\hbox{$\widetilde{\hbox{\ref{#1}}}$}}}
\newcommand\vaa{\mathsf{V}}
\newcommand\vaasub{\vaa\kern-1pt}
\newcommand\vxx{\vaasub_{\scriptscriptstyle X\kern-1pt X}}
\newcommand\vxy{\vaasub_{\scriptscriptstyle X\kern-.25pt Y}}
\newcommand\vyx{\vaasub_{\scriptscriptstyle Y\kern-1pt X}}
\newcommand\vyy{\vaasub_{\scriptscriptstyle Y\kern-.25pt Y}}
\newcommand\vrs{\vaasub_{\scriptscriptstyle R\kern-.25pt S}}
\newcommand\zeromatrix{\mathsf{O}}
\newcommand\irreopen{{\cal I}}
\newcommand\flrx{\flr_{\scriptscriptstyle X}}
\newcommand\flrax{\flr_{\scriptscriptstyle A\setminus X}}
\newcommand\flry{\flr_{\scriptscriptstyle Y}}
\newcommand\flrr{\flr_{\scriptscriptstyle R}}
\newcommand\gflr{\psi}
\newcommand\gflrx{\psi_{\scriptscriptstyle X}}
\newcommand\gflry{\psi_{\scriptscriptstyle Y}}
\newcommand\fxx{F_{\scriptscriptstyle X\kern-1pt X}}
\newcommand\fyy{F_{\scriptscriptstyle Y\kern-.25pt Y}}
\newcommand\fxy{F_{\scriptscriptstyle X\kern-.25pt Y}}
\newcommand\fzz{F_{\scriptscriptstyle Z\kern-1pt Z}}
\newcommand\frr{F_{\scriptscriptstyle R\kern-.25pt R}}
\newcommand\flrbis{\hbox to1.75ex{\hss\hskip1.25pt$\widetilde{\hbox to1.5ex{\hss$\varphi$\hskip1.25pt\hss}}$\hss}}
\newcommand\vaabis{\smash{\widetilde{\hbox{\vphantom{t}\smash{$\vaa$}}}}}
\newcommand\vaabissub{\vaabis\kern-1pt}
\newcommand\vaabisxx{\vaabissub_{\scriptscriptstyle X\kern-1pt X}}
\newcommand\waa{\mathsf{W}}
\newcommand\waabis{\smash{\widetilde{\hbox{\vphantom{t}\smash{$\waa$}}}}}
\newcommand\xstbis{\hbox to1.97ex{\hss\hskip2.5pt$\smash{\widetilde{%
 \hbox to1.9ex{\hss\vphantom{t}\smash{$\xst$}\hskip2.5pt\hss}}}$\hss}}
\newcommand\xstbiss{\smash{\widetilde\xst}}
\newcommand\flrbissub{\flrbis\kern-1pt}
\newcommand\flrxbis{\flrbissub_{\scriptscriptstyle\xstbiss}}
\newcommand\flrbisX{\flrbissub_{\scriptscriptstyle\xst}}
\newcommand\flrbisx{\flrbissub_x}
\newcommand\flrbisy{\flrbissub_y}
\newcommand\flrbisz{\flrbissub_z}
\newcommand\flraxbis{\flrbissub_{\scriptscriptstyle A\setminus\xstbiss}}
\newcommand\xsth{\smash{\widehat X}}
\newcommand\xh{\hat x}
\newcommand\yh{\hat y}
\newcommand\xstp{X'}
\newcommand\ystp{Y'}
\newcommand\fbis{\hbox to1.97ex{\hss\hskip2.5pt$\smash{\widetilde{%
 \hbox to1.9ex{\hss\vphantom{t}\smash{$F$}\hskip2.5pt\hss}}}$\hss}}
\newcommand\fbisxx{\fbis_{\kern-2pt\scriptscriptstyle \xstbiss\kern-1pt\xstbiss}}
\newcommand\chrel{\mathrel{\trianglerighteq}}
\newcommand\chrels{\mathrel{\equiv}}
\newcommand\chrela{\mathrel{\hbox{\relscale{1.25}$\kern1pt\triangleright\kern1pt$}}}
\newcommand\chrelbis{\mathrel{\smash{\widetilde{\hbox{\vrule width0pt height6.5pt\smash{$\trianglerighteq$}}}}}}
\newcommand\chrelabis{\mathrel{\smash{\widetilde{\hbox{\vrule width0pt height6.5pt\smash{\hbox{\relscale{1.25}$\kern1pt\triangleright\kern1pt$}}}}}}}
\newcommand\fun{G}
\newcommand\funbis{\widehat G}
\newcommand\funter{H}
\newcommand\funiv{S}
\newtheorem{proposition}{Proposition}[section]
\newtheorem{lemma}[proposition]{Lemma}
\newtheorem{theorem}[proposition]{Theorem}
\newtheorem{corollary}[proposition]{Corollary}
\newtheorem{open}{Open question}
\newlength\repskip 
\newcommand\cand[1]{\textsf{#1}}
\newcommand\csep{,\kern1pt}
\begin{document}

\thispagestyle{empty}

\null\vskip-15mm\null 

\bgroup
\noindent\hskip-0.05\textwidth\vbox{\hsize=1.1\textwidth%
\begin{center}
\hrule
\vskip7.5mm
\textbf{\uppercase{Fraction-like ratings from preferential voting}}
\par\medskip
\textsc{Rosa Camps,\, Xavier Mora \textup{and} Laia Saumell}
\par
Departament de Matem\`{a}tiques,
Universitat Aut\`onoma de Barcelona,
Catalonia
\par\medskip
\texttt{xmora\,@\,mat.uab.cat}
\par\medskip
Revised 26th March 2014
\vskip7.5mm
\hrule
\end{center}
\egroup

\vskip-6mm\null
\begin{abstract}
A method is given for resolving a matrix of preference scores into a well-specified mixture of options. This is done in agreement with several desirable properties, including the continuity of the mixing proportions with respect to the preference scores and a condition of compatibility with the Condorcet-Smith majority principle. These properties are achieved by combining the classical rating method of Zermelo with a projection procedure introduced in previous papers of the same authors. 

\vskip2pt
\bigskip\noindent
\textbf{Keywords:}\hskip.75em
\textit{%
preferential voting,
paired comparisons,
continuous rating,
majority principles,
Condorcet-Smith principle,
clone consistency,
one-dimen\-sional scaling,
Zermelo's method of strengths,
Luce's choice model.}

\vskip2pt
\bigskip\noindent
\textbf{AMS subject classifications:} 
\textit{%
05C20, 
91B12, 
91B14, 
91C15, 
91C20. 
}
\end{abstract}

\vskip7.75mm
\hrule

\section*{}
A vote is an expression of the preferences of several individuals about certain options with a view towards reaching a common decision.
Generally speaking, the decision need not be choosing a single option, but 
it~can also take the form of mixing a number of them according to certain proportions.
For instance, one could be dividing a prize among several contenders,
or a budget among several items.
This article is aimed at a method
for suitably \emph{determining the proportions of such mixed collective choices.}

The input from which we set ourselves to derive these proportions or mixing fractions is the \emph{matrix of preference scores} of Ramon Llull and Condorcet~\cite[\secpar{3},\,\secpar{7}]{mu},
\ie the matrix that compares each option to every other in~terms of the number of voters
who prefer the former to the latter.

Assume, for instance, that a committee of 18 people must decide how to distribute a budget among
four items \cand{a\csep b\csep c\csep d} and that they express the following preferences:
\begin{equation}
\label{eq:ex1}
10:\cand{a}\!>\!\cand{b}\!>\!\cand{c}\!>\!\cand{d},\quad
3:\cand{b}\!>\!\cand{c}\!>\!\cand{d}\!>\!\cand{a},\quad
3:\cand{c}\!>\!\cand{d}\!>\!\cand{b}\!>\!\cand{a},\quad
2:\cand{d}\!>\!\cand{b}\!>\!\cand{a}\!>\!\cand{c}.
\end{equation}
The number in front of each ranking indicates  how many people expressed~it.
One can work out that \cand{a} is preferred to \cand{b} by 10 people against 8,
\cand{b} is preferred to \cand{c} by 15 against 3, et cetera.
These numbers are collected in the following table, that we call the Llull matrix of the vote:%
\footnote{Since we are interested only in the preferences of $x$ over $y$ for $x\neq y$, we use the diagonal cells for specifying the simultaneous labelling of rows and columns by the existing options. The cell located in row $x$ and column $y$ gives information about the preference of $x$ over $y$.}
\smallskip
\begin{equation}
\label{eq:llull1}
\begin{small}
\begin{tabular}{|c|c|c|c|}
\hlinestrut
\labelcell{a}&10&12&10\\
\hlinestrut
8&\labelcell{b}&15&13\\
\hlinestrut
6&3&\labelcell{c}&16\\
\hlinestrut
8&5&2&\labelcell{d}\\
\hline
\end{tabular}
\end{small}
\,.
\end{equation}
In which proportions should the budget be divided?

\medskip
Notice that the individual votes cannot be recovered from the Llull matrix. Therefore, our setting is not suitable for the purpose of proportional representation, which has to do with mapping the electorate onto the elected options and therefore requires more information than just the Llull matrix of the vote. However, our setting still seems appropriate for distributing a prize or a budget between different options in accordance with their relative merits as summarized in the Llull matrix.

\medskip
In the preceding example the preferential information is complete: since every voter has ordered all the options, the preference scores for any ordered pair of options and its opposite add up to the total number of voters. Generally speaking, however, it need not be so. For instance, voters could give only truncated rankings, where no preferences are expressed between the non-mentioned options. 
The method that we are looking for should be able to deal also with 
such situations of \emph{incomplete preferences}. An extreme case is that where every voter confines to choosing a single option. In this case, the mixing fractions should certainly coincide with the respective vote fractions. We will refer to this requirement as \dfd{single-choice voting consistency}.

Assume, for instance, that 100~voters express themselves in the following way:
\begin{equation}
\label{eq:ex2}
54:\cand{a},\quad
22:\cand{b},\quad
13:\cand{c},\quad
11:\cand{d}.
\end{equation}
That is, 54~voters express their preference for \cand{a} over any of the three other options, but they do not give any information about their preferences between \cand{b\csep c} and \cand{d}; the other voters act similarly in connection with other options. One easily checks that the Llull matrix of this vote takes the following form:
\smallskip
\begin{equation}
\label{eq:llull2}
\begin{small}
\begin{tabular}{|c|c|c|c|}
\hlinestrut
\labelcell{a}&54&54&54\\
\hlinestrut
22&\labelcell{b}&22&22\\
\hlinestrut
13&13&\labelcell{c}&13\\
\hlinestrut
11&11&11&\labelcell{d}\\
\hline
\end{tabular}
\end{small}
\,.
\end{equation}
The condition of single-choice voting consistency requires that in such a situation the mixing fractions shoud be $(0.54,\, 0.22,\, 0.13,\, 0.11).$

%

\medskip
Priority ratings are often used only for ranking purposes.
However, in this article we are interested in mixing fractions per se,
\ie as an expression of which specific share of prize or burden should be allotted to every option.
In consonance with such a \emph{quantitative character,} we require
a \dfc{continuous dependence} of the mixing fractions on the preference scores.


On the other hand, the mixing-fraction character that we are looking for calls also for
the following condition of \dfd{unanimous decomposition},
that we divide in two parts:
\label{txt:ud}
(a)~If every option from a set $\xst$ is unanimously preferred to any option from outside~$\xst$,
then the mixing fractions should vanish outside of $\xst$.
In particular,  if an option is unanimously preferred to any other, then it should get a mixing fraction equal to 1 and all the other mixing fractions should be equal to 0. 
(b)~In the complete case the following converse statement should hold too:
If $\xst$ is the set of options that get non-vanishing fractions,
then each option from $\xst$ is unanimously preferred to any option from outside $\xst;$
besides, $\xst$ is the minimal set with this property.

Consider, for instance, the preferences
\begin{equation}
\label{eq:ex3}
60:\cand{a}\!>\!\cand{b}\!>\!\cand{c}\!>\!\cand{d},\quad
40:\cand{b}\!>\!\cand{a}\!>\!\cand{d}\!>\!\cand{c},\quad
\end{equation}
whose corresponding Llull matrix is
\smallskip
\begin{equation}
\label{eq:llull3}
\begin{small}
\begin{tabular}{|c|c|c|c|}
\hlinestrut
\labelcell{a}&60&100&100\\
\hlinestrut
40&\labelcell{b}&100&100\\
\hlinestrut
0&0&\labelcell{c}&60\\
\hlinestrut
0&0&40&\labelcell{d}\\
\hline
\end{tabular}
\end{small}
\,.
\end{equation}
According to the condition of unanimous decomposition,
in~such a situation the only options with non-vanishing fractions should be \cand{a} and \cand{b}.
Notice also that in the case of example (\ref{eq:ex1}--\ref{eq:llull1}) part~(b) of the condition of unanimous decomposition requires that every option should receive a positive fraction.

\medskip
Our problem can be seen as a special case of a more general one where a matrix of paired-comparison scores is to be summed up into a~set of priority ratings
(not necessarily with the character of mixing fractions).
\ensep
Such a problem arises not only in preferential voting, but also in sport tournaments, psychometrics,
multi-criteria decision theory, web search engine rankings, et cetera.
See for instance \cite{che:1998,gonz,lm:2012}.




However, voting has a special character in that the comparisons
are decided by human individuals.
Because of this, it becomes advisable to comply with certain 
\emph{majority principles}. In the paired-comparison setting, the standard formulation is the \emph{Condorcet principle} 
(see \cite[ch.\,1, \secpar4.2]{mu}, \cite[\secpar7.2]{nitzan} and \cite[p.\,153--154]{t6}):
If the preference scores of a~particular option over the others are all of them \emph{greater than
half} the number of voters, then that option should be socially preferred to any other.
In our setting, being socially preferred means simply getting a larger fraction.
\ensep
More generally, we will consider also the following extended version,
which was introduced in 1973 by John H.~Smith \cite[\secpar 5]{smith}
(except for the provision of vanishing fractions)
and will be referred to as \dfd{Condorcet-Smith principle}: 
If~the options are partitioned in two sets $\xst$ and $\yst$ so that
every member of $\xst$ is preferred to any member of $\yst$ by \emph{more than half} of the voters,
then every member of $\xst$ should get a larger fraction than any member of $\yst$
unless both fractions vanish.

According to this condition, in the case of example (\ref{eq:ex1}--\ref{eq:llull1}) the mixing fractions should decrease along the order $\cand{a}\!>\!\cand{b}\!>\!\cand{c}\!>\!\cand{d}$
(take successively $\xst=\{\cand{a}\},\,\{\cand{a},\cand{b}\},\,\{\cand{a},\cand{b},\cand{c}\}$).

\medskip
As we will see,
the conditions of single-choice voting consistency, continuity and unanimous decomposition are satisfied by a celebrated method that was introduced in 1929 by Ernst Zermelo
in the context of chess tournaments~\cite{ze}.
%
However, Zermelo's method by itself does not comply with the
Condorcet principle.
In~fact, it need not
give the largest fraction to an option that is placed first by more than half of the voters.
For instance, in the case of example (\ref{eq:ex1}--\ref{eq:llull1}) it gives the following fractions: 
\cand{a}: 0.303, \cand{b}: 0.387, \cand{c}: 0.201 \cand{d}: 0.109, where \cand{b} gets the largest fraction in spite of the fact that \cand{a} has a majority of first placings.
As we will show in this article, this problem disappears when Zermelo's method is preceded by the 
\dfc{CLC~projection} that is introduced in~\cite{crc, cri}
(`CLC' stands for ``Continuous Llull-Condorcet'').
In~the case of (\ref{eq:ex1}--\ref{eq:llull1}), this combined procedure gives the following results: \cand{a}:~0.323, \cand{b}: 0.288, \cand{c}: 0.217 \cand{d}:~0.173.

\pagebreak 

The resulting method, that is, the CLC~projection followed by Zermelo's method,
combines, among others, the following properties:
fraction character, including the above-mentioned conditions of single-choice voting consistency and unanimous decomposition,
continuity with respect to the original preference scores, 
and compliance with the Condorcet-Smith principle.
\ensep
To~our knowledge, the previous literature does not offer any other rating method with these properties.

The reader interested to try the proposed method can use the \,\textsl{CLC~calculator}\, which has been made available at~\cite{clc-calculator}.

\medskip
This article is structured as follows: In Section~1 we introduce some general terminology and notation. Section~2 is devoted to Zermelo's method by itself, with some new results, especially in connection with the continuous dependence of the ratings on the data in the reducible case.
Section~3 looks at certain properties of the paired-comparison matrices that arise from the CLC projection of \cite{crc,cri}. Section~4 combines the previous results to show that the concatenation of the CLC projection and Zermelo's method achieves the desired properties. Finally, in Section~5 we ask ourselves for the possibility of other methods with the same properties and we discuss some related questions.

\section{Terminology and notation}

\paragraph{1.1}
We consider a finite set~$\ist$. Its elements represent the options which are the matter of a vote. 
The number of elements of $\ist$ will be denoted by~$N$.
We will be based upon the numbers of voters who expressed a preference for $x$ over $y$,
where $x$ and $y$ vary over all ordered pairs of different options.
These numbers will be denoted by $V_{xy}$. 
Instead of them, most of the time we will be dealing with the fractions $v_{xy}=V_{xy}/V$,
where $V$ denotes the total number of votes.
We will refer to $V_{xy}$ and $v_{xy}$ respectively
as the absolute and relative \dfc{preference scores} associated with the ordered pair $xy,$ 
and the whole collection of these scores will be called the (absolute or relative)
\dfc{Llull matrix} of the vote.

\smallskip
The preference scores are obviously bound to satisfy the inequality
\begin{equation}
\label{eq:sumle1}
v_{xy}+v_{yx}\,\le\,1.
\end{equation}
A~matrix of preference scores satisfying $v_{xy}+v_{yx}=1$ for any $x$ and $y$
will be said to be \dfd{complete}.


\medskip
Incomplete Llull matrices arise when preferences are not expressed by some voters on some pairs of options. In this connection, one must be careful to distinguish a definite indifference about two options from a lack of information about them.
One voter who expresses a definite indifference about $x$ and $y$ should be considered equivalent to half a voter preferring $x$ to $y$\, plus another half a voter preferring~$y$~to~$x$.
In contrast, a voter who gives no information about whether he prefers $x$ to $y$ or viceversa should be counted neither in $V_{xy}$ nor in $V_{yx}.$

In this spirit, a ballot that confines to choosing a single option should be interpreted as expressing \emph{nothing else} than a preference for that option over any other.
Therefore, in the case of single-choice voting
---where everybody votes in this way---
the Llull matrix takes the form $v_{xy} = \scf_x$ for any $y\neq x$, where $\scf_x$ is the fraction of the vote that chooses~$x$.

%
%
%
%
%
%

\medskip
Besides the scores $v_{xy}$, in the sequel we will often deal with the
\dfc{margins} $m_{xy}$ and the \dfc{turnouts}~$t_{xy}$,
which are defined respectively by
\begin{equation}
m_{xy} \,=\, \hbox to26mm{$v_{xy} - v_{yx},$\hfil}\qquad
t_{xy} \,=\, \hbox to26mm{$v_{xy} + v_{yx}.$\hfil}
\end{equation}
Obviously, their dependence on the pair $xy$ is respectively antisymmetric and symmetric, that is
\begin{equation}
m_{yx} \,=\, \hbox to26mm{$- m_{xy},$\hfil}\qquad
t_{yx} \,=\, \hbox to26mm{$t_{xy}.$\hfil}
\end{equation}
It is clear also that the scores $v_{xy}$ and $v_{yx}$
can be recovered from $m_{xy}$ and $t_{xy}$ by means of the formulas
\begin{equation}
v_{xy} \,=\, \hbox to26mm{$(t_{xy} + m_{xy})/2,$\hfil}\qquad
v_{yx} \,=\, \hbox to26mm{$(t_{xy} - m_{xy})/2.$\hfil}
\end{equation}

\smallskip
Instead of the margins $m_{xy} = v_{xy} - v_{yx}$, sometimes, especially in decision theory, one considers the \dfc{ratios} $p_{xy} = v_{xy}/v_{yx}$ (which requires the preference scores to be all of them positive).
\ensep
Alternatively, one can consider the \dfc{relative scores} $q_{xy} = v_{xy}/t_{xy}$ (which only requires the turnouts to be positive).
Obviously, the matrix of relative preference scores is always complete.
\ensep
The ratios and the relative scores are related to each other by the formulas $p_{xy} = q_{xy}/(1-q_{xy})$,
$q_{xy} = p_{xy}/(1+p_{xy})$.
\ensep
Notice however that in the incomplete case neither the margins, nor the ratios, nor the relative scores, allow to recover the original scores, unless one knows also the turnouts $t_{xy}$.


\smallskip
In order to refer to it as a whole, the Llull matrix made of the preference scores $v_{xy}$ will be denoted as $(v_{xy})$, or alternatively as $\vaa$.
\ensep
We will also
use the notation $\vrs$ to mean the restriction of $(v_{xy})$ to~$x\in R$ and $y\in S$, where $R$ and $S$ are arbitrary non-empty subsets of $\ist$.
\ensep
Similarly, if $(u_x)$ is a collection of numbers indexed by $x\in\ist$, its restriction to~$x\in R$ will be denoted as~$u_{\scriptscriptstyle R}$.

\paragraph{1.2}
The simplest
rating of the overall acceptance of an option~$x$ is its \dfd{mean preference score}, that is, the arithmetic mean of its preference scores against all the other options:
\begin{equation}
\label{eq:rrates}
\rho_x \,=\, {\frac{\hbox{\small1}}{\hbox{\small{$N-1$}}}}\,\sum_{y\neq x} v_{xy}.
\end{equation}
This quantity is linearly related to the rank-based count proposed in~1433 by Nikolaus von Kues \cite[\secpar{1.4.3}, \secpar{4}]{mu}
and again in~1770--1784 by Jean-Charles de Borda \cite[\secpar{1.5.2}, \secpar{5}]{mu} 
(both of them being restricted to the complete case).
More specifically,
their count amounts to $1+(N-1)\rho_x = (1-\rho_x) + \rho_x\,N$.
Instead of it, in \cite{crc,cri} we considered the \dfc{mean ranks}
$\arank{x}$, which are given by
\begin{equation}
\label{eq:arank}
\arank{x} \,=\, N-(N-1)\,\rho_x \,=\, \rho_x + (1-\rho_x)\,N.
\end{equation}
Notice that, contrarily to $\rho_x$, lower mean ranks correspond to a higher acceptance.
The ratings~$\rlr{x}$ that were considered in \cite{crc, cri}
are nothing else than the mean ranks that are obtained
after transforming the Llull matrix
by means of the CLC~projection.

The mean preference scores $\rho_x$ can certainly be rescaled to add up to $1$. More interestingly,
in the case of single-choice voting they fulfil the requirement
of coinciding with the vote fractions $\scf_x$.
In~fact, having $v_{xy} = \scf_x$ for any $y\neq x$ certainly implies $\rho_x=\scf_x$.
\ensep
However, they definitely do not satisfy the condition of unanimous decomposition. For instance, for $\ist=\{a,b,c\}$ with $v_{ab} = v_{ac} = v_{bc} = 1$ (which implies $v_{ba} = v_{ca} = v_{cb} = 0$)
$a$ is unanimously preferred to both $b$ and $c$
but $\rho_b = 1/2 > 0$.

\paragraph{1.3}
As we mentioned in the introduction, our goal will be achieved by means of Zermelo's method
together with a prior application of the CLC projection.
In connection with Zermelo's method,
we need to introduce a qualitative notion of priority
that also bears relation to the CLC projection.
In order to define it, 
we will make use of the \dfc{indirect scores} $\isc_{xy}$:
given $x$ and $y$, one considers all possible paths $x_0 x_1 \dots x_n$
going from~$x_0 = x$ to~$x_n = y$; every such path is associated with the
score of its weakest link, \ie the smallest value of $v_{x_ix_{i+1}}$;
finally, $\isc_{xy}$ is defined as the maximum value of this associated score
over all paths from $x$ to $y$.
In other words,
\begin{equation}
\isc_{xy} \hskip.75em = \hskip.75em
\max_{\vtop{\scriptsize\halign{\hfil#\hfil\cr\noalign{\vskip.5pt}$x_0=x$\cr$x_n=y$\cr}}}
\hskip.75em
\min_{\vtop{\scriptsize\halign{\hfil#\hfil\cr\noalign{\vskip-1.25pt}$i\ge0$\cr$i<n$\cr}}}
\hskip.75em v_{x_ix_{i+1}},
\label{eq:paths}
\end{equation}
where the \,$\max$\, operator considers all possible paths from $x$ to $y$,
and the \,$\min$\, operator considers all the links of a particular path.
For instance, the indirect scores for the Llull matrix (\ref{eq:llull1}) are as follows:
\begin{equation}
\label{eq:llull1-indirectscores}
(V^*_{xy}) \,=\, 
\begin{small}
\begin{tabular}{|c|c|c|c|}
\hlinestrut
\labelcell{a}&10&12&12\\
\hlinestrut
8&\labelcell{b}&15&15\\
\hlinestrut
8&8&\labelcell{c}&16\\
\hlinestrut
8&8&8&\labelcell{d}\\
\hline
\end{tabular}
\end{small}
\,.
\end{equation}

\medskip
By the definition of $\isc_{xy}$, the inequality $\isc_{xy}>0$ clearly defines a transitive relation.
In the following we will denote it by the symbol \,$\chrel$. Thus,
\begin{equation}
x\chrel y \ \Longleftrightarrow\  \isc_{xy}>0.
\end{equation}
Associated with it, it is interesting to consider also the following derived relations,
which keep the property of transitivity and are respectively symmetric and asymmetric:
\begin{align}
x\chrels y \ &\Longleftrightarrow\  \isc_{xy}>0 \text{ \,and\, } \isc_{yx}>0,\\[2.5pt]
x\chrela y \ &\Longleftrightarrow\  \isc_{xy}>0 \text{ \,and\, } \isc_{yx}=0.
\end{align}
Therefore, $\chrels$ is an equivalence relation
and $\chrela$ is a partial order.
In the following, the situation where $x\chrela y$ will be expressed by saying that $x$ \dfc{dominates} $y$.

The equivalence classes of~$\ist$ by $\chrels$ are called the \dfc{irreducible components} of~$\ist$ (for~$\vaa$). If there is only one of them, namely $\ist$ itself, then one says that the matrix $\vaa$ is irreducible. So, $\vaa$ is irreducible \ifoi $\isc_{xy}>0$ for any $x,y\in\ist$. It is not difficult to see that this property is equivalent to the following one formulated in terms of the direct scores only: there is no splitting of $\ist$
into two classes $\xst$ and~$\yst$ so that $v_{yx}=0$ for any $x\in\xst$ and $y\in\yst$; in other words, there is no ordering of $\ist$ for which the matrix~$\vaa$ takes the form
\begin{equation}\label{eq:blocs}
\begin{pmatrix} \vxx & \vxy \\ \zeromatrix & \vyy \end{pmatrix},
\end{equation}
where $\vxx$ and $\vyy$ are square matrices and $\zeromatrix$ is a zero matrix.
Besides, a subset $\xst\sbseteq\ist$ is an irreducible component \ifoi $\xst$ is maximal, in~the sense of set inclusion, for the property of $\vxx$ being irreducible.
\ensep
On~the other hand,
it~also happens that the relation \,$\chrela$\, is compatible with the equivalence relation \,$\chrels$, \ie if $x\chrels\bar x$ and $y\chrels\bar y$ then \,$x\chrela y$ implies $\bar x\chrela\bar y$.
As a consequence, the relation \,$\chrela$\, can be applied also to the irreducible components of $\ist$ for $\vaa$.
In~the following we will be interested in the case where $\vaa$ is irreducible, or more generally, when there is a \dfd{top dominant irreducible component}, \ie an irreducible component which dominates any other irreducible component.


\section{Zermelo's method of strengths}
\label{sec:zms}

The Llull matrix of a vote among $V$ voters can be viewed as
a tournament between the members of~$\ist$. In fact, it is as if $x$ and $y$ had played $T_{xy}=t_{xy}V$ matches (the number of voters who made a comparison between $x$ and $y$, even if some of these voters considered $x$ at the same level as $y$) and $V_{xy}=v_{xy}V$ of these matches had been won by $x$, whereas the other $V_{yx}=v_{yx}V$ had been won by~$y$ (one tied match is counted as half a match in favour of $x$ plus half a match in favour of $y$).
It was in such a scenario that Zermelo devised in 1929 his rating method~\cite{ze}.
Later on, the same method has been rediscovered by several other autors (see~\cite{st,ke} and the references therein).

\medskip
Zermelo's method is based upon a probabilistic model for the outcome of a match between two items $x$ and $y$. This model assumes that such a match is won by $x$ with probability $\flr_x/(\flr_x+\flr_y)$ whereas it is won by $y$ with probability $\flr_y/(\flr_x+\flr_y)$, where $\flr_x$ is a non-negative parameter associated with each player~$x$, usually referred to as its {\df strength}. If~all matches are independent events, the probability of obtaining a particular system of values for the scores $(V_{xy})$ 
is~given by
\begin{equation}
P \,=\, \prod_{\{x,y\}}\,\left(\vbox{\halign{\hfil#\hfil\cr$T_{xy}$\cr$V_{xy}$\cr\noalign{\vskip-11pt}}}\right)
\left(\frac{\flr_x}{\flr_x+\flr_y}\right)^{\hskip-3ptV_{xy}}
\left(\frac{\flr_y}{\flr_x+\flr_y}\right)^{\hskip-3ptV_{yx}},
\label{eq:probabilitat}
\end{equation}
where the product runs through all unordered pairs $\{x,y\}\sbseteq\ist$ with $x\neq y$. Notice that $P$~depends only on the strength ratios; in other words, multiplying all the strengths by the same value has no effect on the result. On account of this, we will normalize the strengths by requiring their sum to be equal to~$1$.
\ensep
In order to include certain extreme cases, one must allow for some of the strengths to vanish. However, this may conflict with $P$ being well defined, since it could lead to indeterminacies of the type $0/0$ or $0^0$. 
So,~one should be careful in connection with vanishing strengths.
\ensep
With all this in mind, for the moment we will let the strengths vary in the following set:
\begin{equation}
\phiset = \{\,\flr\in\bbr^\ist\mid\flr_x>0\text{ \,for all }x\in\ist,\ \sum_{x\in\ist}\flr_x = 1\,\}.
\label{eq:conjuntphis}
\end{equation}
Together with this set, in the following we will consider also its closure~$\phisetb$, which includes vanishing strengths, and its boundary $\partial\phiset = \phisetb\setminus\phiset$.
\ensep
As it will be seen below, 
Zermelo's method corresponds to
a maximum likelihood estimate of the parameters~$\flr_{x}$ from a given set of actual values of~$V_{xy}$ (and of $T_{xy}=V_{xy}+V_{yx}$).
In other words, given the values of~$V_{xy}$, one looks for the values of $\flr_x$ which maximize the probability~$P$.

\medskip
The hypothesis of independence which lies behind formula~(\ref{eq:probabilitat}) is certainly not satisfied by the binary comparisons which arise out of preferential voting.
However, it turns out that
the same estimates of the parameters~$\flr_{x}$ arise 
from a related model where
the voters are assumed to express complete definite rankings
(‘definite’ means here ‘without ties’).
\ensep
Both Zermelo's binary model and the ranking model that we are about to introduce
can be viewed as special cases of a more general model,
proposed in 1959 by~Robert Duncan Luce, which considers the outcome of making a choice out~of~multiple options \cite{luce}.
According to Luce's ``choice axiom'', the~probabilities of two different choices $x$ and $y$ \,are in a ratio which does not depend on~which other options are present.
As a consequence, it follows that every option $x$ can be associated a~number~$\flr_x$ so that the probability of~choosing $x$ out of a set~$\xst$ that contains $x$ is given by $\flr_x/(\sum_{y\in\xst}\flr_y)$.
\ensep
Obviously, Zermelo's model corresponds to considering binary choices only.
\ensep
However, Luce's model also allows
to associate every complete definite ranking with a certain probability. In~fact, such a ranking can be viewed as the result of first choosing the winner out of the whole set~$\ist$, then choosing the best of the remainder, and so on. If~these successive choices are assumed to be independent events, then one can easily figure out the corresponding probability.
\ensep
Furthermore, one can see that these probabilities make the expected rank of~$x$
equal to~$E(r_x) = N - \sum_{y\ne x}\flr_x/(\flr_x+\flr_y)$.\linebreak 
By~equating these values to the mean ranks given by equations (\ref{eq:rrates}--\ref{eq:arank}), namely $\arank{x} = N - \sum_{y\neq x} v_{xy}$
---so using the so-called method of moments---
one obtains exactly the same equations for the estimated values of the parameters~$\flr_{x}$ as in the method of maximum likelihood, namely equations (\ref{eq:fratesZ}) below.
\ensep
Notice also that, in accordance with Luce's theory of choice,
the normalization condition $\sum_{x\in\ist}\flr_x = 1$ 
allows to view $\flr_x$ as the first-choice probability of~$x$
(among non-abstainers).
\ensep
Anyway, \ie independently of the reasons behind them, the resulting values of $\flr_x$
will be seen to have good properties for our purposes.

\medskip
In the following we take the point of view of maximum likelihood. So, given the values of~$V_{xy}$, we will look for the values of $\flr_x$ which maximize the probability~$P$. Since $V_{xy}$ and $T_{xy}=V_{xy}+V_{yx}$ are now fixed, this is equivalent to maximizing the following function of the~$\flr_x$:
\begin{equation}
F(\flr) \,=\, \prod_{\{x,y\}}\,
\frac{{\flr_x}^{v_{xy}}\,{\flr_y}^{v_{yx}}}{(\flr_x+\flr_y)^{t_{xy}}},
\label{eq:funcioF}
\end{equation}
(recall that $v_{xy} = V_{xy}/V$ and $t_{xy} = T_{xy}/V$ where $V$~is a positive constant greater than or equal to any of the turnouts $T_{xy}$; going from (\ref{eq:probabilitat}) to (\ref{eq:funcioF}) involves taking the power of exponent $1/V$ and disregarding a fixed multiplicative constant).
\ensep
The function $F$ is certainly smooth on $\phiset$. Besides, it~is clearly bounded from above, since
it is a product of several factors less than or equal to~$1$. However, generally speaking $F$~needs not to achieve a maximum in $\phiset$, because this set is not compact. In~the present situation, the~only general fact that one can guarantee in this connection is the existence of \dfd{maximizing sequences}, \ie sequences $\flrn$ in~$\phiset$ with the property that $F(\flrn)$ converges to the lowest upper bound $\lub=\sup\,\{F(\psi)\,|\,\psi\in\phiset\}$.

\vskip4pt
\medskip
The theorems of this section collect the basic results that we need about Zermelo's method.
The first theorem is standard except for part~(c). However, we prove also parts (a) and (b) because in so doing we introduce several ideas and techniques that are used in part~(c) and in other parts of the paper.

\vskip-6mm\null
\begin{theorem}[Zermelo, 1929 \cite{ze}; see also \cite{fo,ke}]\hskip.5em
\label{st:zermeloirre}
If $\vaa$ is irreducible,\, then:

\iim{a}There is a unique $\flr\in\phiset$ which maximizes $F$ on $\phiset$.

\iim{b}$\flr$ is the solution of the following system of equations:
\begin{align}
\sum_{y\neq x}\,t_{xy}\,\frac{\flr_x}{\flr_x+\flr_y} \,&=\, \sum_{y\neq x}\,v_{xy},
\label{eq:fratesZ}
\\[2.5pt]
\sum_x\,\flr_x \,&=\, 1,
\label{eq:fratesaZ}
\end{align}
\vskip-8pt\hskip1.5\parindent
where $(\ref{eq:fratesZ})$~contains one equation for every~$x$.

\iim{c}$\flr$ is an infinitely differentiable function of the scores $v_{xy}$ as long as they keep satisfying the hypothesis of irreducibility.
\end{theorem}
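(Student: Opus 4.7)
My plan is to handle (a) in two stages, read (b) off the first-order condition, and derive (c) from the implicit function theorem, leaning in each case on irreducibility through the fact that the undirected graph on $\ist$ with edges $\{x,y\}$ such that $t_{xy}>0$ is connected.

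For existence of a maximizer, each factor of $F$ is at most $1$ (since $\flr_x,\flr_y\le\flr_x+\flr_y$ and $v_{xy}+v_{yx}=t_{xy}$), so $\lub\le 1$, while $F$ is strictly positive at the uniform point, hence $\lub>0$. Pick a maximizing sequence $\flrn$ in $\phiset$ and, by compactness of $\phisetb$, pass to a subsequence converging to some $\bar\flr\in\phisetb$. To rule out $\bar\flr\in\partial\phiset$, set $S=\{x:\bar\flr_x=0\}$ and $S'=\ist\setminus S$, both nonempty. Irreducibility provides $s\in S$, $s'\in S'$ joined by a path $s=x_0,\dots,x_n=s'$ with $v_{x_ix_{i+1}}>0$; at the index where the path first leaves $S$ one finds a pair $x\in S$, $y\in S'$ with $v_{xy}>0$. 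The corresponding factor of $F(\flrn)$ then has numerator tending to $0$ (since $\flr_x^n\to 0$ and $v_{xy}>0$) while its denominator stays bounded away from $0$, and every other factor is $\le 1$. Hence $F(\flrn)\to 0$, contradicting $\lub>0$, so $\bar\flr\in\phiset$ and continuity gives $F(\bar\flr)=\lub$.

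For uniqueness, I switch to the logarithmic variables $\xi_x=\log\flr_x$, in which
\begin{equation*}
\log F \,=\, \sum_{\{x,y\}}\bigl[v_{xy}\xi_x+v_{yx}\xi_y-t_{xy}\log(e^{\xi_x}+e^{\xi_y})\bigr].
\end{equation*}
Each summand is concave (log-sum-exp is convex), and the Hessian of $-\log F$ equals $\sum_{\{x,y\}} t_{xy}\flr_x\flr_y/(\flr_x+\flr_y)^2\,(e_x-e_y)(e_x-e_y)^T$ with $e_x$ the $x$-th standard basis vector. A vector $\eta$ in its nullspace must satisfy $\eta_x=\eta_y$ whenever $t_{xy}>0$; since irreducibility makes the corresponding graph connected, $\eta$ must be constant, matching exactly the scale invariance of $F$. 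Thus $\log F$ is strictly concave on the hyperplane $\sum_x\flr_x=1$ and the interior maximizer is unique.

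For (b), the Lagrange condition $\partial\log F/\partial\flr_x=\lambda$ reads $\sum_{y\neq x}v_{xy}/\flr_x-\sum_{y\neq x}t_{xy}/(\flr_x+\flr_y)=\lambda$; multiplying by $\flr_x$ and summing over $x$, the two contributions in each unordered pair collapse via $v_{xy}+v_{yx}=t_{xy}$ and force $\lambda=0$, after which rearrangement produces exactly equation (\ref{eq:fratesZ}), with (\ref{eq:fratesaZ}) being the normalization. For (c), the system (\ref{eq:fratesZ})--(\ref{eq:fratesaZ}) is smooth in $(\flr,\vaa)$ on the open set of irreducible matrices; one equation of (\ref{eq:fratesZ}) is redundant because summation over $x$ reduces the whole system to $0=0$, and replacing this redundant equation by (\ref{eq:fratesaZ}) gives $N$ independent conditions. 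A kernel vector of the resulting Jacobian with respect to $\flr$ must lie in the Hessian nullspace (so be a constant vector) and be orthogonal to $\flr$ (from (\ref{eq:fratesaZ})); only the zero vector meets both conditions, so the implicit function theorem delivers the $C^\infty$ dependence of $\flr$ on the scores. The main obstacle is the boundary-vanishing step in (a): this is where the combinatorial hypothesis of irreducibility must be translated into quantitative control of $F$ along every boundary-bound sequence.
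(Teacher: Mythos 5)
Your proof is correct, and the existence argument (ruling out boundary limit points of a maximizing sequence via a positive-score edge crossing out of the zero-strength set) together with the derivation of the first-order system in part (b) matches the paper's. Where you genuinely diverge is uniqueness: you pass to logarithmic coordinates, observe that $\log F$ is globally concave there (linear terms minus $t_{xy}$ times a log-sum-exp), and identify the Hessian's null space with the constant direction, which is exactly the scale invariance; irreducibility enters only through connectivity of the graph of pairs with $t_{xy}>0$. The paper instead stays in the original variables, computes the Hessian only \emph{at} a critical point (using the critical-point equations to simplify the diagonal), shows it is negative definite transverse to the ray through $\flr$, and then must invoke the mountain pass theorem to upgrade ``every critical point is a strict local maximum'' to ``there is only one critical point''. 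Your global-concavity route removes that topological step and also streamlines part (c), since in log-coordinates the Jacobian of the system (\ref{eq:fratesZ}) is the Hessian itself; the paper reaches the same end by renormalizing to $\flr_a=1$. Two imprecisions are worth repairing. First, ``strictly concave on the hyperplane $\sum_x\flr_x=1$'' is not literally right: that set is not convex in log-coordinates. Argue instead that the set of maximizers of the concave function is convex, so any two maximizers are joined by a segment along which the function is constant, forcing the difference into the Hessian's null space, hence constant, hence a global rescaling, which the normalization kills. Second, in part (c) your kernel vector is a constant vector only in log-coordinates, where the gradient of the constraint $\sum_x\flr_x=1$ is indeed $\flr$; in the original coordinates the kernel direction is $\flr$ itself and the orthogonality is to the all-ones vector. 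Either coordinate system works, but the two halves of your sentence should live in the same one.
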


\setlength\repskip{1.05em} 

\begin{proof}\hskip.5em
Let us begin by noticing that the hypothesis of irreducibility entails that $F$ can be extended to a continuous function on $\phisetb$ by putting $F(\psi)=0$ for $\psi\in\partial\phiset$. In order to prove this claim we must show that $F(\psi^n)\rightarrow 0$ whenever $\psi^n$ converges to a point $\psi\in\partial\phiset$. Let us consider the following sets associated with $\psi$: $\xst=\{x\,|\,\psi_x > 0\}$
and $\yst=\{y\,|\,\psi_y = 0\}$. The second one is not empty since we are assuming $\psi\in\partial\phiset$, whereas the first one is not empty because the strengths add up to the positive value~$1$. Now, for any $x\in\xst$ and $y\in\yst$, $F(\psi^n)$
contains a factor of the form $(\psi^n_y)^{v_{yx}}$, which tends to zero as soon as~$v_{yx}>0$
(while the other factors remain bounded).
So, the only way for $F(\psi^n)$ not to approach zero would be $\vyx=\zeromatrix$, in contradiction with the irreducibility of $\vaa$.

After such an extension, $F$ is a continuous function on the compact set $\phisetb$.\linebreak 
So, there exists $\flr$ which maximizes $F$ on $\phisetb$. However, since $F(\psi)$ vanishes on $\partial\phiset$ whereas it is strictly positive for $\psi\in\phiset$, any maximizer $\flr$ must belong to $\phiset$.
This establishes the existence part of~(a).

Since $F$ is constant on every ray from the origin,
maximizing it on $\phiset$ amounts to the same thing as
maximizing it on the positive orthant $\bbr_+^\ist$.
On~the other hand, maximizing $F$ is certainly equivalent to maximizing $\log F$.
Now, a maximizer of $\log F$ on~$\bbr_+^\ist$ must satisfy the differential conditions
\begin{equation}
\label{eq:gradient}
\frac{\partial\log F(\flr)}{\partial\flr_x} \,=\,
\sum_{y\neq x}\left(\frac{v_{xy}}{\flr_x} - \frac{t_{xy}}{\flr_x+\flr_y}\right) \,=\, 0,
\end{equation}
where $x$~varies over~$\ist$.
Multiplying each of these equations by the corresponding $\flr_x$ results in the system of equations~(\ref{eq:fratesZ}).
\ensep
That system contains $N$~equations for the $N$~variables $\flr_x$ ($x\in\ist$); however, it is redundant: by~using the fact that $v_{xy}+v_{yx} = t_{xy}$, one easily sees that adding up all of the equations in (\ref{eq:fratesZ}) results in a tautology.
\ensep
That is why one can supplement that system with equation~(\ref{eq:fratesaZ}),
which selects the maximizer in $\phiset$.

\emph{Let us see now that the maximizer is unique.}
Instead of following the interesting proof given by Zermelo,
here we will prefer to follow~\cite{ke}, which will have the advantage of preparing matters for part~(c). More specifically, the uniqueness will be obtained by seeing that any critical point of \,$\log F$ as a function on $\phiset$, \ie any solution of (\ref{eq:fratesZ}--\ref{eq:fratesaZ}), is a strict local maximum; this~implies that there is only one critical point, because otherwise
one should have other kinds of critical points~\cite[\S VI.6]{cou}
(we are invoking the so-called mountain pass theorem;
here we are using the fact that $\log F$ tends to $-\infty$ as $\flr$ approaches $\partial\phiset$).
In order to study the character of a critical point we will look at the second derivatives of \,$\log F$ with respect to $\flr$.
By~differentiating~(\ref{eq:gradient}), one obtains that
\begin{align}
\label{eq:hessianxx}
\frac{\partial^2\log F(\flr)}{\partial\flr_x{}^2} \,&=\,
-\, \sum_{y\neq x} \left(\frac{v_{xy}}{\flr^2_x} \,-\,
\frac{t_{xy}}{(\flr_x+\flr_y)^2}\right),
\\[2.5pt]
\label{eq:hessianxy}
\frac{\partial^2\log F(\flr)}{\partial\flr_x\,\partial\flr_y} \,&=\,
\frac{t_{xy}}{(\flr_x+\flr_y)^2},\qquad\text{ for }x\neq y.
\end{align}
On the other hand, when $\flr$ is a critical point, equation~(\ref{eq:fratesZ}) transforms (\ref{eq:hessianxx}) into the following expression:
\begin{equation}
\label{eq:hessianxxBis}
\frac{\partial^2\log F(\flr)}{\partial\flr_x{}^2} \,=\,
-\, \sum_{y\neq x}\,\frac{t_{xy}}{(\flr_x+\flr_y)^2}\,\frac{\flr_{y}}{\flr_{x}}.
\end{equation}

So, the Hessian quadratic form is as follows:
\begin{equation}
\label{eq:hessian}
\begin{split}
\sum_{x,y}\left(
\frac{\partial^2\log F(\flr)}{\partial\flr_x\,\partial\flr_y}
\right)\,\psi_x\,\psi_y
\,&=\, - \sum_{x,y\neq x}\,\frac{t_{xy}}{(\flr_x+\flr_y)^2}\,
\left(\frac{\flr_{y}}{\flr_x} \psi_x^2 - \psi_x\psi_y\right)\\
\,=\, - \sum_{x,y\neq x}\,&\frac{t_{xy}}{(\flr_x+\flr_y)^2\,\flr_x\flr_y}\,
\left(\flr_y^2\psi_x^2 - \flr_x\flr_y\psi_x\psi_y\right)\\
\,=\, -\, \sum_{\{x,y\}}\,&\frac{t_{xy}}{(\flr_x+\flr_y)^2\,\flr_x\flr_y}\,
\left(\flr_y\psi_x - \flr_x\psi_y\right)^2,
\end{split}
\end{equation}
where the last sum runs through all unordered pairs $\{x,y\}\sbseteq\ist$ with $x\neq y$.
The last expression is non-positive and it vanishes \ifoi
$\psi_x/\flr_x = \psi_y/\flr_y$ for any $x,y\in\ist$
(the ``only~if'' part is immediate when $t_{xy}>0$; for~arbitrary $x$ and $y$, the hypothesis of irreducibility allows to connect them through a path $x_0x_1\dots x_n$ ($x_0=x$, $x_n=y$) with the property that\linebreak 
$t_{x_ix_{i+1}}\ge v_{x_ix_{i+1}}>0$ for~any~$i$, so that one gets $\psi_x/\flr_x = \psi_{x_1}/\flr_{x_1} = \dots = \psi_y/\flr_y$).
So,~the vanishing of (\ref{eq:hessian}) happens \ifoi
$\psi=\lambda\flr$ for some scalar $\lambda$.
\ensep
However, when $\psi$ is restricted to variations such that $\flr+\psi$ stays in~$\phiset$, \ie to 
vectors $\psi\in\bbr^\ist$ satisfying $\sum_x\psi_x=0$,
the case $\psi=\lambda\flr$ reduces to $\lambda=0$ and therefore $\psi=0$ (since $\sum_x\flr_x$ is~positive). So, the Hessian is negative definite 
when restricted to such variations.
This ensures that $\flr$ is a~strict local maximum of \,$\log F$ as a function on~$\phiset$.
In fact, one easily arrives at such a conclusion when Taylor's formula is used to analyse the behaviour of $\log F(\flr+\psi)$ for small $\psi$
satisfying $\sum_x\psi_x=0$.


\textit{Finally, let us consider
part}~(c),\ensep that is,
the dependence of $\flr\in\phiset$ on the matrix~$\vaa$.
To~begin with, we notice that the set $\irreopen$ of irreducible matrices is open since it is a finite intersection of open sets, namely one open set for each splitting of $\ist$ into two sets $\xst$ and $\yst$.
The dependence of $\flr\in\phiset$ on~$\vaa$\linebreak[3] is due to the presence of $v_{xy}$ and $t_{xy}\cd=v_{xy}\cd+v_{yx}$ in the equations~(\ref{eq:fratesZ}--\ref{eq:fratesaZ})\linebreak 
which determine~$\flr$. However, we are not in the standard setting of the implicit function theorem, since we are dealing with a system of $N\cd+1$~equations whilst $\flr$ varies in a~space of dimension~$N\cd-1$.
In order to place oneself in a standard setting, it~is~convenient here to replace the condition of normalization $\sum_x\flr_x=1$ \,by~the alternative one\, $\flr_a=1$, where $a$ is a fixed element of $\ist$.
This change of normalization corresponds to mapping $\phiset$ \,to\, $U = \{\,\flr\in\bbr^\ist\,|\,\flr_x>0\text{ for all }x\in\ist,\, \flr_a=1\,\}$ by means of the diffeo\-morphism
$g:\flr\mapsto\flr/\flr_a$, which has the property that~$F(g(\flr)) = F(\flr)$.
By~an~argument of the same kind as that used at the end of the preceding paragraph,
one sees that the Hessian bilinear form of $\log F$ is negative definite when restricted to variations so as to stay in~$U$.
Therefore, if we take as coordinates on~$U$ the $\flr_x$ with $x\in\ist\setminus\{a\}=:\ist'$,\, the function $F$ restricted to~$U$ has the property that the matrix $(\,{\partial^2\log F(\flr)}/{\partial\flr_x\partial\flr_y}\mid x,y\cd\in\ist')$ is negative definite and therefore invertible, which entails that the system of equations $(\,{\partial\log F(\flr,\vaa)}/{\partial\flr_x}=0\mid x\cd\in\ist')$ 
---where we made explicit the dependence on $\vaa$---
determines $\flr\in U$ as a~smooth function of $\vaa\in\irreopen$.
\end{proof}

\bigskip
The next theorem is the core result for ensuring at the same time both the condition of unanimous decomposition and the continuity of the ratings with respect to the data.
\ensep
Let us recall that a maximizing sequence 
means a sequence $\flrn\in\phiset$ such that $F(\flrn)$ approaches the lowest upper bound of $F$ on $\phiset$.

\smallskip
\begin{theorem}[Statements (a) and (b) are contained in \cite{ze}]
\label{st:zermelore}
Assume that there exists
a top dominant irreducible component $\xst$.
In this case:

\iim{a} There is a unique $\flr\in\phisetb$ such that
any maximizing sequence converges to $\flr$.

\iim{b} $\flrax = 0$, \,whereas $\flrx$ has all components positive and coincides with the solution of a system analogous to \textup{(\ref{eq:fratesZ}--\ref{eq:fratesaZ})} where $x$ and $y$ vary only within $\xst$.

\iim{c} $\flr$ is a continuous function of the scores $v_{xy}$ as long as they keep satisfying the hypotheses of the present theorem.
\end{theorem}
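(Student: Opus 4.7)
The plan is to exploit the block-triangular structure of $\vaa$ induced by the top-dominance of $\xst$. Let $\xst=C_1,C_2,\dots,C_k$ denote all the irreducible components of $\ist$, indexed so that $i<j$ whenever $C_i\chrela C_j$; this is possible since $\chrela$ is a partial order on components and $\xst$ is its unique maximum. The indexing entails $v_{yx}=0$ whenever $x\in C_i$, $y\in C_j$ with $i<j$. Writing $s_i:=\sum_{x\in C_i}\flr_x$ and $\hat\flr_{C_i}:=\flr_{C_i}/s_i$, the function $F$ factorizes as
\[F(\flr) \,=\, \prod_{i=1}^{k} G_i(\hat\flr_{C_i})\cdot \prod_{1\le i<j\le k}\,\prod_{x\in C_i,\,y\in C_j}\left(\frac{\flr_x}{\flr_x+\flr_y}\right)^{t_{xy}},\]
where $G_i$ is the function of the form~(\ref{eq:funcioF}) associated with the submatrix of $\vaa$ on $C_i$, homogeneous of degree zero in its argument. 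Each cross factor lies in $[0,1]$, and each $G_i$ attains its maximum $\bar G_i$ uniquely at an interior point $\flr^\ast_{C_i}$ of the $C_i$-simplex by Theorem~\ref{st:zermeloirre} applied to this irreducible block. Hence $F\le\prod_i\bar G_i$, with equality reached in the limit by taking $\hat\flr_{C_i}=\flr^\ast_{C_i}$ and letting $s_i\to 0$ for $i\ge 2$ sufficiently fast; in particular, $\sup_{\phiset}F=\prod_i\bar G_i$.

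For parts~(a) and~(b), take any maximizing sequence $\flrn\in\phiset$. Since each $G_i(\hat\flr_{C_i}^n)\le\bar G_i$ and each cross factor lies in $[0,1]$, the convergence $F(\flrn)\to\prod_i\bar G_i$ forces $G_i(\hat\flr_{C_i}^n)\to\bar G_i$ for every $i$ and every cross factor to tend to~$1$. Uniqueness of the maximizer of $G_i$ then yields $\hat\flr_{C_i}^n\to\flr^\ast_{C_i}$. To establish that $s_j\to 0$ for every $j\ge 2$, I argue by induction on~$j$: the top-dominance of $\xst$ together with the chosen indexing produces some $x\in C_i$ and $y\in C_j$ with $i<j$ and $v_{xy}>0$ (otherwise $C_j$ would itself be a source of the condensation, contradicting the top-dominance of $\xst$). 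The corresponding cross factor tending to~$1$ gives $\flr^n_y/\flr^n_x\to 0$, and combined with $\flr^n_y=s_j\hat\flr^n_y\ge c\,s_j$ (eventually, since $\hat\flr^n_y\to\flr^\ast_y>0$) and $\flr^n_x\le 1$, this yields $s_j\to 0$. Consequently $s_1\to 1$, $\flr^n_x\to\flr^\ast_x$ for $x\in\xst$, and $\flr^n_y\to 0$ for $y\notin\xst$. The limit $\flr\in\phisetb$ is thereby uniquely characterized, proving~(a) and the first half of~(b); the second half follows by applying Theorem~\ref{st:zermeloirre}(b) to the irreducible submatrix~$\vxx$.

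For part~(c), the idea is to combine three continuous dependences: $\vxx$ on $\vaa$ (trivial), $\flrx=\flr^\ast_{C_1}$ on $\vxx$ as long as the latter remains irreducible (Theorem~\ref{st:zermeloirre}(c)), and $\flrax\equiv 0$ on $\vaa$ (trivial). Within the subset of matrices that satisfy the present theorem's hypothesis with the same top dominant component~$\xst$, irreducibility of $\vxx$ is preserved and these three ingredients combine to give continuity. A convergent sequence $\vaa_n\to\vaa$ whose top dominant components vary along the way requires slightly more care, but reapplying the cascading estimate of part~(a) uniformly in~$n$ shows that the mass outside the limiting $\xst$ still vanishes. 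The main obstacle throughout is precisely this cascading step: one must reconcile the rates given by the cross factors with the positivity of the normalized Zermelo solutions in each $C_i$ in order to push $s_j\to 0$ down the topological order; the rest of the argument essentially reduces the theorem to the irreducible case of Theorem~\ref{st:zermeloirre}.
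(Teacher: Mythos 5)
Your treatment of parts (a) and (b) is correct and is essentially the paper's argument in a tidier form: the paper also factorizes $F$ over the irreducible components and the cross pairs, but establishes the key ratios $\flr^n_y/\flr^n_x\to 0$ via a rescaling trick (replacing $\gflry$ by $\lambda\gflry$ strictly increases the cross factor) and then an induction over components, whereas you first identify $\sup_{\phiset}F=\prod_i\bar G_i$ and deduce that every factor of a maximizing sequence must converge to its own supremum. Both routes are sound; yours dispenses with the induction, since the existence of an edge $v_{xy}>0$ entering each $C_j$, $j\ge 2$, from an earlier component already forces $s_j\to 0$ directly.

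Part (c), however, has a genuine gap. The case you dispose of --- perturbations that preserve the top dominant component $\xst$ --- is indeed immediate from part (b) and Theorem~\ref{st:zermeloirre}(c), but it is not the hard case. Under an arbitrarily small perturbation a zero entry $v_{yx}=0$ can become positive, so the top dominant component of the perturbed matrix can strictly contain $\xst$; the perturbed maximizer is then \emph{strictly positive} on options outside $\xst$, and one must prove that this mass tends to $0$ and that the restriction to $\xst$ converges to $\flrx$. Your sentence ``reapplying the cascading estimate of part (a) uniformly in $n$'' does not do this: the estimates of part (a) concern a maximizing sequence for a \emph{fixed} matrix and give no modulus that is uniform in the data; in particular, $F(\flr,\vaa)$ is not continuous in $\vaa$ uniformly near $\partial\phiset$ (compare $\flr_y^{\,0}=1$ with $\flr_y^{\,\epsilon}$ for tiny $\flr_y$), so one cannot simply pass to the limit in the variational characterization. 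The paper fills this gap with three quantitative steps: a uniform lower bound on the perturbed maximizer obtained by comparing $F$ at the maximizer with its value at the uniform point (giving $\flr_y=O(\flr_x)$ whenever $v_{xy}>0$); the summed stationarity identity (\ref{eq:sumw}) with $\wst=\xst$, which controls the total mass leaking out of $\xst$ and upgrades the bound to $\flr_y=o(\flr_x)$; and an implicit--function argument for the system (\ref{eq:fratesZ}) restricted to $\xst$ with slightly modified right-hand sides. Some argument of this quantitative kind is unavoidable, and your proposal does not contain it.
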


\vskip-2mm 
\remark The below given proof of statements~(a) and (b) follows \brwrap{\dbibref{ze}{p.\,440--442}}. 
Again, we include it because it prepares the path for the proof of~(c). Partial results related to~(c) are contained in \cite[Thm.\,1.1]{cg}. However, they consider only a special way of varying the scores $v_{xy},$ namely adding a common $\varepsilon\downarrow0$ to all the scores. Besides, their proof uses some tools from algebraic geometry, whereas ours stays in the domain of calculus.

\begin{proof}\hskip.5em
The definition of the lowest upper bound immediately implies the existence of maximizing sequences. On the other hand, the compactness of~$\phisetb$ guarantees that any maximizing sequence has a subsequence which converges in~$\phisetb$.
Let $\flrn$ and $\flr$ denote respectively one of such convergent maximizing sequences and its limit.
In the following we will see that $\flr$ must be the unique point specified in statement~(b). This~entails that any maximizing sequence converges itself to $\flr$ (without extracting a subsequence).

\textit{So, our aim is now statement}~(b).\ensep
From now on we will write $\yst=\ist\setminus\xst$,
and a general element of $\bbr_+^\ist$ will be denoted by $\gflr$.
For convenience, in this part of the proof 
we will replace the condition $\sum_x\gflr_x = 1$ by $\sum_x\gflr_x \le 1$ (and similarly for $\flrn$ and $\flr$); since $F(\lambda\gflr) = F(\gflr)$ for any $\lambda>0$,
the properties that we will obtain will be easily translated from
$\widehat\phiset = \{\,\gflr\in\bbr^\ist\mid\gflr_x>0 
\text{ for all }x\in\ist,\ \sum_{x\in\ist}\gflr_x \le 1\,\}$ \,to\, $\phiset$.
On the other hand, it will also be convenient to consider first the case where $\yst$ is also an irreducible component.
In such a case, it is interesting to  rewrite $F(\gflr)$ as a product of three factors:
\begin{equation}
F(\gflr) \,=\, \fxx(\gflrx)\,\fyy(\gflry)\,\fxy(\gflrx, \gflry),
\end{equation}
namely:
{\allowdisplaybreaks 
\begin{align}
\fxx(\gflrx) \,\,&=\, \prod_{\{x,\bar x\}\sbseteq\xst}\,
\frac{{\gflr_x}^{v_{x\bar x}}\,{\gflr_{\bar x}}^{v_{\bar xx}}}{(\gflr_x+\gflr_{\bar x})^{t_{x\bar x}}},
\label{eq:funcioFxx}
\\[2.5pt]
\fyy(\gflry) \,\,&=\, \prod_{\{y,\bar y\}\sbseteq\yst}\,
\frac{{\gflr_y}^{v_{y\bar y}}\,{\gflr_{\bar y}}^{v_{\bar yy}}}{(\gflr_y+\gflr_{\bar y})^{t_{y\bar y}}},
\label{eq:funcioFyy}
\\[2.5pt]
\fxy(\gflrx,\gflry) \,\,&=\,
\kern6pt\prod_{{\scriptstyle x\in\xst \atop\scriptstyle y\in\yst }}\,\kern6pt
\left(\frac{\gflr_x}{\gflr_x+\gflr_y}\right)^{\hskip-3ptv_{xy}}\hskip-3pt,
\label{eq:funcioFxy}
\end{align}
}
where we used that $v_{yx}=0$ and $t_{xy}=v_{xy}$. Now, let us look at the effect of replacing $\gflry$ by $\lambda\gflry$ without varying $\gflrx$. The values of $\fxx$ and $\fyy$ remain unchanged, but that of $\fxy$ varies in the following way:
\begin{equation}
\frac{\fxy(\gflrx,\lambda\gflry)}{\fxy(\gflrx,\gflry)} \,\,=\,
\kern6pt\prod_{{\scriptstyle x\in\xst \atop\scriptstyle y\in\yst }}\,\kern6pt
\left(\frac{\gflr_x+\gflr_y}{\gflr_x+\lambda\gflr_y}\right)^{\hskip-3ptv_{xy}}.
\label{eq:factorFxy}
\end{equation}
In particular, for $0<\lambda<1$ each of the factors of the right-hand side of~(\ref{eq:factorFxy}) is greater than or equal to $1$. This remark leads to the following argument.
\ensep
First, we~can see that $\flrn_y/\flrn_x\rightarrow 0$ for any $x\in\xst$ and $y\in\yst$ such that $v_{xy}>0$ (such pairs $xy$ exist because of the hypothesis that $\xst$ dominates~$\yst$). Otherwise, the~preceding remark entails that the sequence $\flrbis^n = (\flrx^n,\lambda\flry^n)$ with $0<\lambda<1$ would satisfy \,$F(\flrbis^n) > K F(\flrn)$ for some $K>1$ and infinitely many $n$, in contradiction with the hypothesis that $\flrn$ was a maximizing sequence. On the other hand, we see also that $\fxy(\flrn)$ approaches its lowest upper bound, namely~$1$.
\ensep
Having achieved such a property, the problem of maximizing $F$ reduces to separately maximizing $\fxx$ and $\fyy$, which is solved by Theorem~\ref{st:zermeloirre}. For~the moment we are dealing with relative strengths only, \ie without any normalizing condition like~(\ref{eq:fratesaZ}). So, we see that $\fyy$ gets optimized when each of the ratios $\flrn_y/\flrn_{\bar y}\ (y,\bar y\in\yst)$ approaches the homologous one for the unique maximizer of~$\fyy$, and analogously with~$\fxx$. Since these ratios are finite positive quantities, the statement that $\flrn_y/\flrn_x\rightarrow 0$ becomes extended to any $x\in\xst$ and $y\in\yst$ whatsoever (since one can write $\flrn_y/\flrn_x = (\flrn_y/\flrn_{\bar y})\times(\flrn_{\bar y}/\flrn_{\bar x})\times(\flrn_{\bar x}/\flrn_x)$ with $v_{\bar x\bar y}> 0$). Let us recover now the condition $\sum_{x\in\ist}\flrn_x=1$. The preceding facts imply that $\flry^n\rightarrow 0$, whereas $\flrx^n$ converges to the unique maximizer of $\fxx$. This establishes~(b) as well as the uniqueness part of~(a).

The general case where $\yst$ decomposes into several irreducible components, all of them dominated by $\xst$, can be taken care of by
induction over the different irreducible components of $\ist$.
At each step, one deals with an irreducible component $\zst$ with the property of being minimal, in the sense of the dominance relation~$\chrela$, among those which are still pending. By~means of an argument analogous to that of the preceding paragraph, one sees that:\ensep
(i)~$\flrn_z/\flrn_x\rightarrow 0$ for any $z\in\zst$~and~$x$ such that $x\chrela z$ with $v_{xz}>0$;\ensep
(ii)~the ratios $\flrn_z/\flrn_{\bar z}\ (z,\bar z\in\zst)$ approach the homologous ones for the unique maximizer of~$\fzz$;\ensep
and (iii)~$\flrr^n$ is a maximizing sequence for $\frr$, where $R$~denotes the union of the pending components, $\zst$ excluded.
Once this induction process has been completed, one can combine its partial results to show that $\flrn_z/\flrn_x\rightarrow 0$ for any $x\in\xst$ and $z\not\in\xst$ (it suffices to consider a path $x_0x_1\dots x_n$ from $x_0\in\xst$ to $x_n=z$ with the property that $v_{x_ix_{i+1}}>0$ for any $i$ and to notice that each of the factors $\flrn_{x_{i+1}}/\flrn_{x_i}$ remains bounded while at least one of them tends to zero). As above, one concludes that $\flrax^n\rightarrow 0$, whereas $\flrx^n$ converges to the unique maximizer of $\fxx$.

\smallskip
\textit{The two following remarks will be useful in the proof of part}~(c):
\linebreak 
(1)~\label{remarkOne}According to the proof above, $\flrx$~is determined (up to a multiplicative constant) by equations~(\ref{eq:fratesZ})
with $x$ and $y$ varying only within~$\xst$:
\smallskip\null
\begin{equation}
\label{eq:yinx}
\fun_x(\flrx,\vaa) \,:=\,
\sum_{\latop{\scriptstyle y\in\xst}{\scriptstyle y\neq x}}\,
t_{xy}\,\frac{\flr_x}{\flr_x+\flr_y} \,-\,
\sum_{\latop{\scriptstyle y\in\xst}{\scriptstyle y\neq x}}\,
v_{xy} \,=\, 0,\qquad\forall x\in\xst.\kern-10pt
\end{equation}
However, since $y\in\ist\setminus\xst$ implies on the one hand $\flr_y=0$ and on the other hand $t_{xy}=v_{xy}$, each of the preceding equations is equivalent to a similar one where $y$ varies over the whole of $\ist\setminus\{x\}$:
\begin{equation}
\label{eq:ally}
\funbis_x(\flr,\vaa) \,:=\,
\sum_{\latop{\scriptstyle y\in\ist}{\scriptstyle y\neq x}}\,
t_{xy}\,\frac{\flr_x}{\flr_x+\flr_y} \,-\,
\sum_{\latop{\scriptstyle y\in\ist}{\scriptstyle y\neq x}}\,
v_{xy} \,=\, 0,\qquad\forall x\in\xst.\kern-10pt
\end{equation}
(2)~\label{remarkTwo}Also, it is interesting to see the result of
adding up the equations (\ref{eq:ally}) for all $x$ in some subset $\wst$ of $\xst$. Using the fact that $v_{xy}+v_{yx}=t_{xy}$, one sees that such an addition results in the following equality:
\begin{equation}
\label{eq:sumw}
\sum_{\latop{\scriptstyle x\in\wst}{\scriptstyle y\not\in\wst}}\,
t_{xy}\,\frac{\flr_x}{\flr_x+\flr_y} \,-\,
\sum_{\latop{\scriptstyle x\in\wst}{\scriptstyle y\not\in\wst}}\,
v_{xy} \,=\, 0,\qquad\forall\kern.75pt\wst\sbseteq\xst.\kern-5pt
\end{equation}

\textit{Let us proceed now with the proof of}~(c).\ensep
In the following, $\vaa$ and $\vaabis$\linebreak[3] denote respectively a fixed matrix satisfying the hypotheses of the theorem and a~slight perturbation of it.
In the following we systematically use a tilde to distinguish between hom\-olo\-gous objects associated respectively with $\vaa$~and~$\vaabis$; in particular, such a notation will be used in connection with the labels of certain equations.
Our aim is to~show that\, $\flrbis$ approaches $\flr$ \,as\, $\vaabis$ approaches $\vaa$. In this connection we will use the little-o and big-O notations made popular by Edmund Landau (who, by the way, wrote also on the rating of chess players~\cite{la:1895,la:1914}, as we will see in \secpar{5.1}).\ensep
This notation refers here to
functions of $\vaabis$ and their behaviour as $\vaabis$ approaches $\vaa$;
\ensep
if $f$ and $g$ are two such functions,
\ensep
$f=o(g)$\,~means that for every $\epsilon>0$ there exists a~$\delta>0$ such that $\|\vaabis-\vaa\|\le\delta$ implies $\|f(\vaabis)\|\le\epsilon\,\|g(\vaabis)\|$;
\ensep
on~the~other hand, 
$f=O(g)$\, means that there exist $M$ and~$\delta>0$ such that $\|\vaabis-\vaa\|\le\delta$ implies $\|f(\vaabis)\|\le M\,\|g(\vaabis)\|$.

Obviously, if $\vaabis$ is near enough to $\vaa$ then $v_{xy}>0$ implies $\vbis_{xy}>0$.\linebreak 
As a consequence, $x\chrel y$ implies $x\chrelbis y$. In particular, the irreducibility
of~$\vxx$ entails that $\vaabisxx$~is also irreducible.
Therefore, $\xst$ is entirely contained in~some irreducible component $\xstbis$ of $\ist$ for $\vaabis$.
Besides, $\xstbis$ is a top dominant irreducible component for $\vaabis$; in fact, we have the following chain of implications for $x\in\xst\sbseteq\xstbis$: $y\not\in\xstbis \,\Rightarrow\, y\not\in\xst \,\Rightarrow\, x\chrela y \,\Rightarrow\, x\chrelbis y \,\Rightarrow\, x\chrelabis y$, where we have used successively: the inclusion $\xst\sbseteq\xstbis$, the hypothesis that $\xst$ is top dominant for $\vaa$, the fact that $\vaabis$ is near enough to $\vaa$, and the hypothesis that $y$ does not belong to the irreducible component $\xstbis$.\linebreak 
Now, according to part~(b) and remark~(1) from p.\,\pageref{remarkOne}--\pageref{remarkTwo},
$\flrx$ and $\flrxbis$ are~determined respectively by the systems (\ref{eq:yinx}) and (\tref{eq:yinx}), or equivalently by~(\ref{eq:ally}) and (\tref{eq:ally}), whereas $\flrax$ and $\flraxbis$ are both of them equal to zero. So~we~must show that $\flrbisy=o(1)$ for any $y\in\xstbis\setminus\xst$, and that $\flrbisx-\flr_x=o(1)$ for~any $x\in\xst$. The proof is organized in three main steps.

\halfsmallskip
\textit{Step}~(1).\ensep
\textit{$\flrbisy=O(\flrbisx)$ whenever $v_{xy}>0$}.\ensep
For the moment, we assume $\vaabis$ fixed (near enough to $\vaa$ so that $\vbis_{xy}>0$) and $x,y\in\xstbis$.
Under these hypotheses one can argue as follows:
Since $\flrxbis$ maximizes $\fbisxx$, the corresponding value of $\fbisxx$ can be bounded from below by any particular value of the same function. On~the other hand, we can bound it from above by the factor $(\flrbisx/(\flrbisx+\flrbisy))^{\vbis_{xy}}$. So, we can write
\begin{equation}
\label{eq:desigualtatsstep1}
\kern-10pt
\left(\frac12\right)^{\hskip-3pt\textstyle\frac{N(N-1)}{2}} \kern-2pt\le \left(\frac12\right)^{\lower2pt\hbox{$\latop{\textstyle\sum}{\vrule width0pt height8pt\scriptstyle \{p,q\}\sbseteq\xstbiss}$}\kern-2pt\raise4pt\hbox{$\textstyle \tbis_{pq}$}} =
\fbisxx(\psi)
\,\le\, \fbisxx(\flrxbis) \,\le\,
\left(\frac{\flrbisx}{\flrbisx+\flrbisy}\right)^{\hskip-3pt\vbis_{xy}}\kern-3pt,\kern-2pt
\end{equation}
where $\psi$ has been taken so that $\psi_q$ has the same value for all $q\in\xstbis$.
The preceding inequality entails that
\begin{equation}
\label{eq:desigualtatsstep1bis}
\flrbisy \,\le\, \left(2^{\,N(N-1)\,/\,\vbis_{xy}} - 1\right) \,\flrbisx.
\end{equation}
Now, this inequality holds not only for $x,y\in\xstbis$,
but it is also trivially true for~$y\not\in\xstbis$,
since then one has $\flrbisy=0$.
On the other hand, the case $y\in\xstbis,\ x\not\in\xstbis$ is not possible at all, because the hypothesis that
$\vbis_{xy}>0$ would then contradict the fact that $\xstbis$ is a top dominant irreducible component.
\ensep
Finally, we let $\vaabis$ vary towards $\vaa$. The~desired result is a consequence of~(\ref{eq:desigualtatsstep1bis}) since $\vbis_{xy}$ approaches $v_{xy}>0$.

\halfsmallskip
\textit{Step}~(2).\ensep
\textit{$\flrbisy=o(\flrbisx)$ for any $x\in\xst$ and $y\not\in\xst$}.\ensep
Again, we will consider first the special case where $v_{xy}>0$.
In this case the result is easily obtained as a consequence of the equality (\tref{eq:sumw}) for $\wst=\xst$:
\begin{equation}
\label{eq:sumx}
\sum_{\latop{\scriptstyle x\in\xst}{\scriptstyle y\not\in\xst}}\,
\tbis_{xy}\,\frac{\flrbisx}{\flrbisx+\flrbisy} \,-\,
\sum_{\latop{\scriptstyle x\in\xst}{\scriptstyle y\not\in\xst}}\,
\vbis_{xy} \,=\, 0.
\end{equation}
In fact, this equality implies that
\begin{equation}
\label{eq:sumxbis}
\sum_{\latop{\scriptstyle x\in\xst}{\scriptstyle y\not\in\xst}}\,
\tbis_{xy}\,\left(1-\frac{\flrbisx}{\flrbisx+\flrbisy}\right) \,=\,
\sum_{\latop{\scriptstyle x\in\xst}{\scriptstyle y\not\in\xst}}\,
\vbis_{yx}.
\end{equation}
Now, it is clear that the right-hand side of this equation is $o(1)$ and that each of the terms of the left-hand side is positive or zero. Since $\tbis_{xy} - v_{xy} = \tbis_{xy} - t_{xy} = o(1)$, the hypothesis that $v_{xy}>0$ allows to conclude that $\flrbisx/(\flrbisx+\flrbisy)$ approaches~$1$, or equivalently, $\flrbisy=o(\flrbisx)$.
\ensep
Let us consider now the case of any $x\in\xst$ and $y\not\in\xst$. Since $\xst$ is top dominant, we know that there exists a path $x_0x_1\dots x_n$ from $x_0=x$ to $x_n=y$ such that $v_{x_ix_{i+1}}>0$ for all $i$. According to step~(1) we have $\flrbissub_{x_{i+1}}=O(\flrbissub_{x_i})$. On the other hand, there must be some $j$ such that $x_j\in\xst$ but $x_{j+1}\not\in\xst$, which has been seen to imply that $\flrbissub_{x_{j+1}}=o(\flrbissub_{x_j})$. By combining these facts one obtains the desired result.

\halfsmallskip
\textit{Step}~(3).\ensep
\textit{$\flrbisx-\flr_x=o(1)$ for~any $x\in\xst$}.\ensep
Consider the equations (\tref{eq:ally}) for $x\in\xst$ and split the sums in two parts depending on whether $y\in\xst$ or $y\not\in\xst$:
\begin{equation}
\label{eq:arranged}
\sum_{\latop{\scriptstyle y\in\xst}{\scriptstyle y\neq x}}\,
\tbis_{xy}\,\frac{\flrbisx}{\flrbisx+\flrbisy} \,-\,
\sum_{\latop{\scriptstyle y\in\xst}{\scriptstyle y\neq x}}\,
\vbis_{xy} \,=\,
\sum_{y\not\in\xst}\,(\vbis_{xy}-\tbis_{xy}\,\frac{\flrbisx}{\flrbisx+\flrbisy}).
\end{equation}
The last sum is $o(1)$ since step~(2) ensures that $\flrbisy=o(\flrbisx)$ and we also know that $\tbis_{xy}-\vbis_{xy}=\vbis_{yx} = o(1)$ (because $x\in\xst$ and $y\not\in\xst$). So $\flrbis$ satisfies a system of the following form, where $x$ and $y$ vary only within~$\xst$ \,and\, $\widetilde w_{xy}$ is a slight modification of $\vbis_{xy}$ which absorbs the right-hand side of (\ref{eq:arranged}):
\begin{equation}
\label{eq:vwdetall}
\funter_x(\flrbisX,\vaabis,\waabis) \,:=\,
\sum_{\latop{\scriptstyle y\in\xst}{\scriptstyle y\neq x}}\,
\tbis_{xy}\,\frac{\flrbisx}{\flrbisx+\flrbisy} \,-\,
\sum_{\latop{\scriptstyle y\in\xst}{\scriptstyle y\neq x}}\,
\widetilde w_{xy} \,=\, 0,\hskip1.5em\forall x\in\xst.\kern-5pt
\end{equation}
Here, the second argument of $\funter$ refers to the dependence on $\vaabis$ through~$\tbis_{xy}$.
We know that $\tbis_{xy} - t_{xy} = o(1)$ and also that $\widetilde w_{xy} - v_{xy} = (\widetilde w_{xy} - \vbis_{xy}) +\linebreak 
(\vbis_{xy} - v_{xy}) = o(1)$.
So~we are interested in the preceding equation near the point $(\flrx,\vaa,\vaa)$. Now, in this point we have $\funter(\flrx,\vaa,\vaa) = \fun(\flrx,\vaa) = 0$, as~well~as $(\partial\funter_x/\partial\flrbisy)(\flrx,\vaa,\vaa) = (\partial\fun_x/\partial\flr_y)(\flrx,\vaa)$. Therefore, the implicit function theorem can be applied similarly as in Theorem~\ref{st:zermeloirre},
with the result that $\flrbisX=\funiv(\vaabis,\waabis)$, where $\funiv$ is a smooth function that satisfies \hbox{$\funiv(\vaa,\vaa)=\flrx$.} In particular, the continuity of $\funiv$ allows to conclude that $\flrbisX$~approaches $\flrx$, since we know that both $\vaabis$ and $\waabis$ approach $\vaa$.

\halfsmallskip
Finally, by combining the results of steps~(2) and (3) one obtains $\flrbisy\cd=o(1)$ for any $y\not\in\xst$.
\end{proof}

\remarks

1. Part~(a) states that every maximizing sequence converges towards a particular $\phi$ in $\phisetb$.
The converse statement is false: converging towards this $\phi$ is not a sufficient condition for being a maximizing sequence.
The preceding proof shows that a~necessary and sufficient condition for $\flrn$ to be a maximizing sequence is that the ratios $\flrn_z/\flrn_y$ tend to $0$ whenever $y\chrela z$, whereas, for $y\chrels z$, \ie if $y$ and $z$ belong to the same irreducible component $Z$, they approach the homologous ratios for the unique maximizer of~$\fzz$.

2. If there is not a dominant component, then the maximizing sequences can have multiple limit points.

3. The non-linear system (\ref{eq:fratesZ}--\ref{eq:fratesaZ}) can be solved by the following iterative scheme \cite{ze,fo}:
\begin{align}
\sum_{y\neq x}\,t_{xy}\,\frac{\flr^{(n+1)}_x}{\flr^{(n)}_x+\flr^{(n)}_y} \,&=\, \sum_{y\neq x}\,v_{xy},
\label{eq:fratesZn}
\\[2.5pt]
\sum_x\,\flr^{(n+1)}_x \,&=\, 1,
\label{eq:fratesaZn}
\end{align}

\section{CLC structure}
\label{sec:clc}

This section is devoted to paired-comparison matrices with a certain special structure,
namely the structure that arises from the CLC~projection that we introduced in \cite{crc,cri}.
As we will see, these matrices have good properties in connection with Zermelo's method
and with the dominance relation that was defined in~\secpar{1.3}.

\paragraph{3.1}
A paired-comparison matrix will be said to have \dfd{CLC~structure},
or to be a \dfd{CLC~matrix}, when 
\textit{there exists a total order $\xi$ on $\ist$ such that
\begin{alignat}{2}
\label{eq:vxyinequality}
&v_{xy} \,\ge\, v_{yx},
\qquad &&\hbox{whenever $x\rxi y$},
\\[2.5pt]
\label{eq:vequaltomax}
&v_{xz} \,\,=\,\, \max\,(v_{xy},v_{yz}),\qquad &&\hbox{whenever $x\rxi y\rxi z$},
\\[2.5pt]
\label{eq:vequaltomin}
&v_{zx} \,\,=\,\, \min\,(v_{zy},v_{yx}),\qquad &&\hbox{whenever $x\rxi y\rxi z$},
\\[2.5pt]
\label{eq:ttm}
&0 \,\le\, t_{xz} - t_{x'z} \,\le\, m_{xx'},\qquad &&\hbox{whenever $z\not\in\{x,x'\}$},
\end{alignat}
where $x\rxi y$ means that $x$ precedes $y$ in the order $\xi$, and $x'$ denotes the element of $\ist$ that immediately follows $x$ in the order~$\xi$}.
In such a situation 
the total order~$\xi$ will be called an \dfc{admissible order} for the matrix $(v_{xy})$.

\medskip
Our interest in the CLC structure derives from the following fact:

\newcommand\bla{\cite[Thm.~4.5]{cri}}
\begin{theorem}[\bla]
\label{st:characterization}
The CLC~projection always results in a CLC~matrix. Besides, a CLC~matrix is invariant by the CLC~projection.
\end{theorem}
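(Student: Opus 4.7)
The plan is to reduce the statement to four verifications, one for each of the defining conditions (\ref{eq:vxyinequality})--(\ref{eq:ttm}) of CLC~structure, and then to handle the invariance assertion as an easy consequence. Since the CLC projection is not recalled in the excerpt, the first step is to unpack its construction from~\cite{crc,cri}: it should produce, from an arbitrary Llull matrix, both an admissible order~$\xi$ on~$\ist$ and a new matrix whose off-diagonal entries are determined by taking extremal values (maxima and minima of indirect scores, in the spirit of $\isc_{xy}$ introduced in~\secpar{1.4}) along chains ordered by~$\xi$. With that description in hand, the ordering~$\xi$ is chosen precisely so that (\ref{eq:vxyinequality}) holds, and identities of the form $v_{xz}=\max(v_{xy},v_{yz})$ and $v_{zx}=\min(v_{zy},v_{yx})$ for $x\rxi y\rxi z$ should be built directly into the way the projection assigns the off-diagonal entries along any chain of~$\xi$.

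The bulk of the work, then, is to verify (\ref{eq:vxyinequality}), (\ref{eq:vequaltomax}), and (\ref{eq:vequaltomin}) from the construction and to check that the turnout inequality (\ref{eq:ttm}) is preserved. For (\ref{eq:vequaltomax}) and (\ref{eq:vequaltomin}), I would argue by transitivity of \,max/min\, over chains: if the projected value $v_{xz}$ is defined as an optimum along paths respecting~$\xi$, then splitting any such path at an intermediate element~$y$ (with $x\rxi y\rxi z$) yields the two-step formulas at once. For (\ref{eq:vxyinequality}), I would show that the admissible order~$\xi$ generated by the projection is by construction compatible with the majority relation on the projected matrix, so that $x\rxi y$ forces $v_{xy}\ge v_{yx}$.

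The anticipated main obstacle is (\ref{eq:ttm}), which is the only condition mixing turnouts and margins and the only one that is genuinely \emph{quantitative} rather than merely order-theoretic. To control it, I would decompose the difference $t_{xz}-t_{x'z}$ using the antisymmetric/symmetric relations $t_{xy}=v_{xy}+v_{yx}$, $m_{xy}=v_{xy}-v_{yx}$ together with the already-established identities (\ref{eq:vequaltomax})--(\ref{eq:vequaltomin}) applied to the triples $\{x,x',z\}$. The goal is to express the turnout difference as a sum of non-negative contributions dominated by $m_{xx'}$ through a telescoping argument along the order~$\xi$, which is where the specific structure of the CLC projection's treatment of the turnouts becomes essential.

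Part two, the invariance, should follow easily once part one is done. If $\vaa$ already satisfies (\ref{eq:vxyinequality})--(\ref{eq:ttm}) with admissible order~$\xi$, then (\ref{eq:vequaltomax}) and (\ref{eq:vequaltomin}) imply that the indirect scores $\isc_{xy}$ along any $\xi$-chain coincide with the direct scores $v_{xy}$, so every extremal operation performed by the projection reproduces the original entries; and the existence of an admissible order ensures that~$\xi$ (or an equivalent order) is exactly what the projection's ordering step would select, so no reshuffling occurs. Therefore the projection is the identity on matrices with CLC~structure.
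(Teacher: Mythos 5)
The first thing to note is that the paper does not prove this theorem at all: it is imported verbatim from the companion paper, as the bracketed attribution \cite[Thm.~4.5]{cri} indicates, and that is also where the CLC projection itself is defined. So there is no internal proof to compare yours against, and your proposal cannot substitute for one, because it never pins down the object it is reasoning about. Every step is conditional on a guessed description of the projection (``it should produce\dots'', ``should be built directly into\dots'', ``I would argue\dots''), and the guess is not accurate: the construction in \cite{crc,cri} is not simply ``take extremal values of indirect scores along chains''. It first computes the indirect scores $\isc_{xy}$ by max--min over paths, uses them to fix the order $\xi$, and then performs a genuine further projection step on the margins (an additional max--min over intervals of that order), together with a separate prescription for the turnouts in the incomplete case; the output is not $\isc_{xy}$ itself, which in general does \emph{not} satisfy (\ref{eq:vequaltomax})--(\ref{eq:ttm}) --- that is precisely why a projection is needed. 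Without the actual definition in hand, your verification of (\ref{eq:vequaltomax})--(\ref{eq:vequaltomin}) by ``splitting a path at an intermediate element'' is not an argument, and the condition you correctly single out as the crux, the quantitative turnout/margin inequality (\ref{eq:ttm}), is left entirely open: you describe a strategy (telescoping along $\xi$) but derive no inequality.

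The invariance half inherits the same gap in a subtler form. Even granting that for a matrix with CLC structure the indirect scores reproduce the direct ones (which does follow from (\ref{eq:vequaltomax})--(\ref{eq:vequaltomin}) together with Lemma~\ref{st:pattern}), you still need to know that the projection's ordering step selects an admissible order (and behaves correctly when several admissible orders exist, e.g.\ under ties), and that the subsequent margin projection and turnout assignment are both identities on such matrices. All of that requires the explicit formulas from \cite{cri}. As written, your proposal is a plan for a proof, not a proof; to make it one you would have to begin by reproducing the definition of the CLC projection and then carry out the four verifications against that definition, which is exactly the content of \cite[Thm.~4.5]{cri} that the present paper chooses to cite rather than reprove.
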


For instance, in the case of the Llull matrix (\ref{eq:llull1}) the CLC projection results in the following CLC matrix:
\begin{equation}
\label{eq:llull1-indirectscores}
(V^\pi_{xy}) \,=\, 
\begin{small}
\begin{tabular}{|c|c|c|c|}
\hlinestrut
\labelcell{a}&10&11&11\\
\hlinestrut
8&\labelcell{b}&11&11\\
\hlinestrut
7&7&\labelcell{c}&11\\
\hlinestrut
7&7&7&\labelcell{d}\\
\hline
\end{tabular}
\end{small}
\,.
\end{equation}
Most of this paper ---the only exceptions are the proofs of Proposition~\ref{st:decomposition-proposition} and Theorem~\ref{st:inversion-thm}--- does not depend on the details of the CLC projection procedure, which are given in \cite{crc,cri}.

\medskip
In the following we will also make use of the following facts:

\begin{lemma}\hskip.5em 
\label{st:pattern} A CLC~matrix satisfies the following inequalities:
\begin{alignat}{3}
\label{eq:vinequalities}
&v_{xz} \,\ge\, v_{yz},\quad &&v_{zx} \,\le\, v_{zy},\qquad
&&\hbox{whenever $x\rxi y$ and $z\not\in\{x,y\}$},
\\[2.5pt]
\label{eq:tinequalities}
&t_{xz} \,\ge\, t_{yz},\quad &&t_{zx} \,\ge\, t_{zy},\qquad
&&\hbox{whenever $x\rxi y$ and $z\not\in\{x,y\}$}.
\end{alignat}
\end{lemma}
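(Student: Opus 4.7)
\medskip\noindent
\textit{Proof plan.}\hskip.5em I prove (\ref{eq:vinequalities}) and (\ref{eq:tinequalities}) together. Given $x \rxi y$ and $z \not\in \{x, y\}$, split on the position of~$z$ into three cases: (A)~$z \rxi x \rxi y$, (B)~$x \rxi y \rxi z$, and (C)~$x \rxi z \rxi y$. In case~(A), applying (\ref{eq:vequaltomin}) and (\ref{eq:vequaltomax}) to the triple $(z, x, y)$ gives $v_{yz} = \min(v_{yx}, v_{xz}) \le v_{xz}$ and $v_{zy} = \max(v_{zx}, v_{xy}) \ge v_{zx}$; case~(B) is symmetric via the triple $(x, y, z)$. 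For the turnouts in (A) and~(B), I iterate (\ref{eq:ttm}) along~$\xi$ with the free index fixed at~$z$: the walk from~$x$ to~$y$ never meets~$z$, so every successor step is a legal instance of the rule.

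Case~(C) is the substantive one. My first step is to reduce to the adjacent sub-case in which $x$ is the predecessor $z^-$ of~$z$ and $y$ its successor $z^+$. Iterating case~(B) of the $v$-inequality from $x$ to $z^-$ gives $v_{xz} \ge v_{z^-,z}$, and iterating case~(A) from $z^+$ to~$y$ gives $v_{z^+,z} \ge v_{yz}$; the analogous iteration handles $v_{z,\cdot}$ and the turnouts. It thus remains to establish, within the triple $(z^-, z, z^+)$,
\begin{equation*}
v_{z^-,z} \,\ge\, v_{z^+,z},\qquad v_{z,z^-} \,\le\, v_{z,z^+},\qquad t_{z^-,z} \,\ge\, t_{z^+,z}.
\end{equation*}

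For the adjacent turnout inequality I exploit the symmetry of~$t$ to walk around~$z$:
\begin{equation*}
t_{z^-,z} \,=\, t_{z,z^-} \,\ge\, t_{z^+,z^-} \,=\, t_{z^-,z^+} \,\ge\, t_{z,z^+} \,=\, t_{z^+,z},
\end{equation*}
where the first ``$\ge$'' is (\ref{eq:ttm}) read with $(x, x')$ instantiated as $(z, z^+)$ and the free index as~$z^-$, and the second with $(z^-, z)$ and free index~$z^+$; in each case the free index lies outside the adjacent pair.

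The main obstacle is the adjacent $v$-inequality. Abbreviating $e = v_{z^-,z}$, $e' = v_{z,z^+}$, $f = v_{z,z^-}$, $f' = v_{z^+,z}$, I must show $e \ge f'$ and $e' \ge f$. The ingredients are: (i)~$e \ge f$ and $e' \ge f'$ from (\ref{eq:vxyinequality}); (ii)~$v_{z^-,z^+} = \max(e, e')$ and $v_{z^+,z^-} = \min(f, f')$ from (\ref{eq:vequaltomax}) and (\ref{eq:vequaltomin}); and (iii)~the two instances of (\ref{eq:ttm}) for the consecutive pairs $(z^-, z)$ and $(z, z^+)$, which become
\begin{align*}
\max(e, e') + \min(f, f') - e' - f' \,&\le\, e - f,\\
e + f - \max(e, e') - \min(f, f') \,&\le\, e' - f'.
\end{align*}
A short case analysis on the signs of $e - e'$ and $f - f'$ finishes the job: in the two ``cross'' cases (opposite signs) both inequalities are immediate chains from~(i); in the ``both nonnegative'' case the first displayed bound yields $f \le e'$ while~(i) and the case hypothesis give $e \ge f \ge f'$; and in the ``both nonpositive'' case the lower-bound halves of (\ref{eq:ttm}) force $e = e'$ and $f = f'$, making the conclusion trivial. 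Combined with the reduction, this proves (\ref{eq:vinequalities}) and (\ref{eq:tinequalities}) in case~(C) and completes the lemma.
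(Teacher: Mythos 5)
Your proof is correct, but it takes a genuinely different route: the paper disposes of this lemma in one line by combining Theorem~\ref{st:characterization} with an external result (\cite[Thm.~4.3]{cri}) about matrices produced by the CLC projection, whereas you derive (\ref{eq:vinequalities}) and (\ref{eq:tinequalities}) directly from the four defining axioms (\ref{eq:vxyinequality})--(\ref{eq:ttm}), with no reference to the projection machinery at all. The substance of your argument is exactly where it should be: cases (A) and (B) and the turnout chains are routine, and the real content is the adjacent triple $(z^-,z,z^+)$ in case (C), where you use (\ref{eq:vequaltomax})--(\ref{eq:vequaltomin}) to express $t_{z^-z^+}$ as $\max(e,e')+\min(f,f')$ and then squeeze the desired inequalities $e\ge f'$, $e'\ge f$ out of \emph{both} halves of (\ref{eq:ttm}) --- the upper bound yielding $f\le e'$ in the "both nonnegative" case and the lower bound forcing $e=e'$, $f=f'$ in the "both nonpositive" case. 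I checked the four sign cases and the two displayed instances of (\ref{eq:ttm}); they are all correct, as is the symmetric walk $t_{z^-z}\ge t_{z^-z^+}\ge t_{z^+z}$ for the adjacent turnouts. What your approach buys is a self-contained verification that the lemma is a formal consequence of the CLC axioms alone, which makes the present paper independent of \cite[Thm.~4.3]{cri} at this point; what it costs is about a page of case analysis that the paper avoids by leaning on the companion article, where essentially this computation already lives.
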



\begin{proof}\hskip.5em
Let us begin by noticing that it suffices to prove the following particular inequalities:
\begin{alignat}{2}
\label{eq:vinequality}
&v_{zx} \,\le\, v_{zy},\qquad
&&\hbox{whenever $x\rxi y$ and $z\not\in\{x,y\}$},
\\[2.5pt]
\label{eq:tinequality}
&t_{xz} \,\ge\, t_{yz},\qquad
&&\hbox{whenever $x\rxi y$ and $z\not\in\{x,y\}$}.
\end{alignat}
In fact, (\ref{eq:tinequality}) contains both inequalities of (\ref{eq:tinequalities}) since 
$t_{\alpha\beta}=t_{\beta\alpha}$, 
and the first inequality of (\ref{eq:vinequalities}) follows from (\ref{eq:vinequality}) and (\ref{eq:tinequality})
since $t_{\alpha\beta}=v_{\alpha\beta}+v_{\beta\alpha}$.
\ensep
In order to prove (\ref{eq:vinequality}--\ref{eq:tinequality})
we will distinguish three cases:\ensep
(i)~$x\rxi y\rxi z$;\ensep
(ii)~$z\rxi x\rxi y$;\ensep
(iii)~$x\rxi z\rxi y$.

\halfsmallskip Case~(i)\,: $x\rxi y\rxi z$.\ensep
In this case, the inequality (\ref{eq:vinequality}) derives from~(\ref{eq:vequaltomin}).
On the other hand, (\ref{eq:tinequality}) follows by an iterated application of the first inequality of (\ref{eq:ttm}): $t_{xz}\ge t_{x'z}\ge \dots \ge t_{yz}.$

\halfsmallskip Case~(ii)\,: $z\rxi x\rxi y$.\ensep
This case is analogous to the preceding one, with the only difference that it relies on (\ref{eq:vequaltomax}) instead of (\ref{eq:vequaltomin}).

\halfsmallskip Case~(iii)\,: $x\rxi z\rxi y$.\ensep 
In order to deal with this case, we will start by the special subcase where $x,z,y$ are consecutive in the order $\xi$, \ie we will start by the inequalities
\begin{align}
\label{eq:creix-v-horizont-diagonal}
v_{x'x}\,&\le\, v_{x'x''},
\\[2.5pt]
\label{eq:creix-t-diagonal}
t_{xx'}\,&\ge\, t_{x''x'}.
\end{align}
These inequalities are obtained by adding up two particular cases of (\ref{eq:ttm}),
namely the one where $z$ is replaced by $x''$
and the one where $x$ and $z$ are replaced respectively by $x'$ and $x$.
In fact, this addition results in
\begin{equation}
0\,\le\, t_{xx'}-t_{x'x''}\,\le\, m_{x'x''}+m_{xx'},
\label{eq:suma-eqsts}
\end{equation}
whose two inequalities give respectively (\ref{eq:creix-t-diagonal}) and (\ref{eq:creix-v-horizont-diagonal}).
\ensep
Finally, the general situation $x\rxi z\rxi y$ can be dealt with by combining (\ref{eq:creix-t-diagonal}) and (\ref{eq:creix-v-horizont-diagonal}) with the results of cases~(i) and~(ii): In fact, if $a$ denotes the immediate predecessor of $z$ in the order~$\xi$, we can write
\begin{alignat*}{4}
&v_{zx} \,&&\le\, v_{za} \,&&\le\, v_{zz'} \,&&\le\, v_{zy},
\\[2.5pt]
&t_{xz} \,&&\ge\, t_{az} \,&&\ge\, t_{z'z} \,&&\ge\, t_{yz}.\qedhere
\end{alignat*}
\end{proof}

\halfsmallskip
\begin{proposition}\hskip.5em
\label{st:existencia-X} A non-vanishing CLC~matrix has a top dominant irreducible component $\xst$ with the special property that
\begin{equation}
\label{eq:irred-projectada-positivitat}
v_{xy}\,>\,0,\qquad\mbox{whenever\, $x\in\xst$ and\, $y\ne x$.}
\end{equation}
\end{proposition}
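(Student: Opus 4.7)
The plan is to fix an admissible total order and relabel so that $x_1 \rxi x_2 \rxi \cdots \rxi x_N$; set $a_l = v_{x_l x_{l+1}}$ and $b_l = v_{x_{l+1} x_l}$ for $1 \le l < N$, so that (\ref{eq:vxyinequality}) yields $a_l \ge b_l \ge 0$. Iterating (\ref{eq:vequaltomax}) and (\ref{eq:vequaltomin}) provides the fundamental identities
\[ v_{x_i x_j} \,=\, \max_{i \le l < j} a_l, \qquad v_{x_j x_i} \,=\, \min_{i \le l < j} b_l \qquad (i < j). \]

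Next, I would let $K$ be the largest integer in $\{1, \ldots, N\}$ such that $b_l > 0$ for every $l < K$, and set $\xst = \{x_1, \ldots, x_K\}$. The min-formula and the choice of $K$ show that $v_{x_j x_i} > 0$ whenever $i < j \le K$, so $v_{x_i x_j} \ge v_{x_j x_i} > 0$ too; every off-diagonal entry of $\vxx$ is then positive. This simultaneously makes $\vxx$ irreducible and yields (\ref{eq:irred-projectada-positivitat}) for pairs inside $\xst$. If $K = N$ there is nothing else to check, so I assume $K < N$ and hence $b_K = 0$. The factor $b_K$ now appears in the min-formula for $v_{x_j x_i}$ whenever $j > K \ge i$, making all such entries vanish; and any $\chrel$-path from $\{x_{K+1}, \ldots, x_N\}$ back into $\xst$ must contain some edge crossing the threshold, which by the same reasoning carries zero score. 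Hence no outside element is $\chrel$-related to an element of $\xst$, which shows $\xst$ maximal for irreducibility.

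The heart of the argument is to check that $v_{x_i x_j} > 0$ for every $x_i \in \xst$ and $x_j \not\in \xst$, \ie for $i \le K < j$. From the max-formula $v_{x_i x_j} \ge a_i$, and for $i < K$ the inequality $a_i \ge b_i > 0$ settles the matter; the delicate case is $i = K$, which reduces to proving $a_K > 0$. This is the main obstacle, to be handled by exploiting (\ref{eq:ttm}). Arguing by contradiction, if $a_K = 0$ then, together with $b_K = 0$, one has $m_{x_K x_{K+1}} = 0$, so (\ref{eq:ttm}) applied to the consecutive pair $(x_K, x_{K+1})$ forces $t_{x_K z} = t_{x_{K+1} z}$ for every $z \not\in \{x_K, x_{K+1}\}$. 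When $K \ge 2$, I would take $z = x_1$ and expand the two turnouts via the fundamental identities: the vanishing of $a_K$ and $b_K$ collapses $v_{x_1 x_{K+1}}$ to $v_{x_1 x_K}$ and annihilates $v_{x_{K+1} x_1}$, while $v_{x_K x_1}$ stays strictly positive; the required equality of turnouts then collapses to $v_{x_K x_1} = 0$, a contradiction. The case $K = 1$ is handled by the same argument with $z = x_j$, $j \ge 3$, which extracts first $b_2 = 0$ and then $a_2 = 0$; iterating along $\xi$ eventually yields $a_l = b_l = 0$ for all $l$, contradicting the non-vanishing of $\vaa$. (The degenerate subcase $K = 1$, $N = 2$ is trivial, since $a_1 = b_1 = 0$ already forces $\vaa = 0$.)

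Once $a_K > 0$ is in hand, $v_{x_K x_j} \ge a_K > 0$ for every $j > K$, completing (\ref{eq:irred-projectada-positivitat}). Moreover, for any $x_i \in \xst$ and $x_j \not\in \xst$ we now have both $v_{x_i x_j} > 0$ and $v_{x_j x_i} = 0$, so $x_i \chrela x_j$; hence $\xst$ $\chrela$-dominates every other irreducible component and is therefore the top dominant one, as required.
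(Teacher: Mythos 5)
Your argument follows the paper's own blueprint quite closely: the set $\xst$ you build from the first break in the consecutive backward scores $b_l=v_{x_{l+1}x_l}$ is exactly the paper's $\xst$, the min--identity gives the zeros $v_{yx}=0$ across the boundary, and the positivity inside $\xst$ is obtained the same way. The one place where you diverge is the positivity of the crossing entries $v_{x_ix_j}$ for $i\le K<j$: the paper never needs $a_K>0$ when $K\ge2$, because it invokes Lemma~\ref{st:pattern} (the second inequality of (\ref{eq:vinequalities}) with $z=x_i$ and $\bar x\in\xst\setminus\{x_i\}$) to get $v_{x_ix_j}\ge v_{x_i\bar x}>0$, and only the degenerate case $\xst=\{x_1\}$ requires the turnout axiom, in the form ``$t_{x_1x_2}=0$ forces $\vaa$ to vanish'' via (\ref{eq:tinequalities}). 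Your direct proof that $a_K>0$ when $K\ge2$, via (\ref{eq:ttm}) applied to $(x_K,x_{K+1})$ with $z=x_1$, is correct and a legitimate alternative.

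There is, however, a genuine gap in your case $K=1$. With $a_1=b_1=0$, the instance of (\ref{eq:ttm}) you name gives $t_{x_1x_j}=t_{x_2x_j}$; for $j=3$ this reads $a_2+\min(b_1,b_2)=a_2+b_2$, i.e.\ $a_2=a_2+b_2$, which yields $b_2=0$ but says nothing whatsoever about $a_2$ --- the claimed ``and then $a_2=0$'' does not follow from that equality, so the iteration cannot start. To kill $a_2$ you must use a different instance of (\ref{eq:ttm}): the left inequality for the consecutive pair $(x_2,x_3)$ with $z=x_1$ gives $t_{x_1x_2}\ge t_{x_1x_3}$ (using the symmetry of $t$), and since $t_{x_1x_2}=a_1+b_1=0$ this forces $t_{x_1x_3}=0$, hence $a_2=0$. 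Iterating this turnout monotonicity (which is exactly the content of (\ref{eq:tinequalities}) and of the paper's argument that $t_{aa'}=0$ annihilates the whole matrix) completes the case $K=1$. With that substitution your proof is complete; everything else, including the irreducibility, maximality and dominance of $\xst$, is sound.
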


\newcommand{\fst}{a}

\begin{proof}\hskip.5em
If $v_{xy}>0$ for all $x,y$, then $(v_{xy})$ is irreducible 
and we are done. So, let us assume that $v_{xy}=0$ for some $x,y$.
By~(\ref{eq:vxyinequality}) and (\ref{eq:vequaltomin}),
this implies the existence of some $p$ such that $v_{p'p}=0$.
Here we are considering an arbitrary admissible order $\xi$,
which we fix for the rest of the proof.
Let $\fst$ be the first element of $\ist$ according to this order.
We will see that the top dominant component is the set $\xst$ defined by
$$
\xst \,=\,
\begin{cases}
\{x\in A \mid v_{p'p}>0\,\mbox{ for all }p\rxi x \},&\text{if $v_{p'p}>0$ for some $p$},\\
\{a\},&\text{if $v_{p'p}=0$ for any $p$}.
\end{cases}
$$
From this definition it immediately follows that having $x\in\xst$ and $y\not\in\xst$ implies $x\rxi y$. This fact will be used repeatedly in the following.

From the definition, it is also clear that for any $x\in\xst$ and $y\not\in\xst$ there
exists $p$ with $x\rxieq p\rxi y$ such that $v_{p'p}=0$. By virtue of 
(\ref{eq:vequaltomin}), it follows that
\begin{equation}
\label{eq:irred-projectada-zeros}
v_{yx}\,=\,0,\qquad\mbox{whenever\, $x\in\xst$ and $y\not\in\xst$.}
\end{equation}
The claim that $\xst$ is the top dominant component will be a consequence of the preceding property together with~(\ref{eq:irred-projectada-positivitat}), to which we devote the rest of the proof.

Let us begin by seeing that $v_{\fst\fst'}>0$.
In fact, according to (\ref{eq:vxyinequality}) having $v_{\fst\fst'}=0$ would imply $v_{\fst'\fst}=0$ and therefore $t_{\fst\fst'}=0$; by (\ref{eq:tinequalities}), this would imply the vanishing of the whole matrix $(v_{xy})$, against one of the assumptions.\ensep
Now, by virtue of (\ref{eq:vinequalities}) it follows that
$v_{\fst y}\,>\,0$\, for all\,$y\ne\fst$.
This finishes the proof if $\xst$ consists of $\fst$ only.
\ensep
In the other cases, observe first that the definition of $\xst$ combined with (\ref{eq:vxyinequality}) and (\ref{eq:vequaltomin}) ensures
$v_{x\bar x}\,>\,0$\, for all\, $x,\bar x\in\xst.$
Finally, (\ref{eq:vinequalities}) allows to derive that
$v_{xy}\,>\,0$\, for all\, $x\in\xst$ and $y\not\in\xst$,
which completes the proof.
\end{proof}

%
%
%
%
%
%
%

\def\auu{a}
\def\cuu{b}
\def\ciu{c}
\def\buu{d}
\def\duu{e}
\def\fuu{f}

\smallskip
\begin{lemma}\hskip.5em
\label{st:obsRosa}
A non-vanishing CLC~matrix has the following properties, where
$\xi$ is any admissible order,
$\rho_x$ are the mean preference scores,
and $\xst$ is the top dominant component:

\iim{\auu} $x\rxi y$ \,implies\ \, $\rho_x\ge\rho_y$.

\iim{\cuu} $\rho_x>\rho_y$ \,implies\ \,
the inequalities
$(\ref{eq:vxyinequality})$ and $(\ref{eq:vinequalities})$.

\iim{\ciu} $\rho_x=\rho_y$ \,implies\ \,
that $(\ref{eq:vxyinequality})$ and $(\ref{eq:vinequalities})$
hold with equality signs.

\iim{\buu} $\rho_x=\rho_y$ \,\ifoi\, $v_{xy}=v_{yx}$.

\iim{\duu} $\rho_x>\rho_y$ \,\ifoi\, $v_{xy}>v_{yx}$.


\iim{\fuu} $\rho_x>\rho_y$ \,whenever $x\in\xst$ and $y\not\in\xst$.
\end{lemma}

\begin{proof}\hskip.5em
Let us recall that the
mean preference scores $\rho_x$ are defined by equation~(\ref{eq:rrates}).
From that equation it follows that
\begin{equation}
\label{eq:rhodif}
(N-1)(\rho_x-\rho_y) \,=\, (v_{xy}-v_{yx})+ \sum_{z\ne x,y} (v_{xz}-v_{yz}).
\end{equation}
In the sequel we will use also the fact that,
according to the definition of CLC~matrix and Lemma~\ref{st:pattern},
\begin{equation}
\label{eq:impin}
x\rxi y \hskip.75em\Rightarrow\hskip.75em
\text{\vtop{\hsize92mm\parindent=0pt
inequalities (\ref{eq:vxyinequality}) and (\ref{eq:vinequalities}),\ensep namely:\hfil\break
$v_{xy}\ge v_{yx},\hskip.5em
v_{xz}\ge v_{yz},\hskip.5em
v_{zx}\le v_{zy},\hskip.5em
\text{for any } z\not\in\{x,y\}.$}}
\end{equation}
Statement~(\auu) is an immediate consequence of combining (\ref{eq:rhodif}) and (\ref{eq:impin}).
\ensep
Since $\xi$ is a total order, the contrapositive of (\auu) amounts to say that
$\rho_x>\rho_y$ implies $x\rxi y$.
Combining this implication with~(\ref{eq:impin}) gives~(\cuu).
\ensep
Let us now assume $\rho_x=\rho_y$ as in~(\ciu);\ensep
since $\xi$ is a total order, we can also assume without loss of generality that $x\rxi y$;\ensep
according to (\ref{eq:impin}), this ensures that all the terms of the right-hand side of (\ref{eq:rhodif})
are positive or zero;\ensep since the left-hand side vanishes, we arrive at the conclusion that every 
term of the right-hand side must vanish; so, we get $v_{xy} = v_{yx}$ as well as
$v_{xz} = v_{yz}$ for any $z\not\in\{x,y\}$. 
In order to complete the proof of~(\ciu) it remains to prove that we have also $v_{zx} = v_{zy}$ for any $z\not\in\{x,y\}$.
This will be a consequence of the fact that we will prove next.

\smallskip
In fact, we claim that
\begin{equation}
\label{eq:impeq}
v_{xy}= v_{yx} \hskip.75em\Rightarrow\hskip.75em
v_{xz}= v_{yz},\hskip.5em
v_{zx}= v_{zy},\hskip.5em
\text{for any } z\not\in\{x,y\}.
\end{equation}
In order to prove this implication we will distinguish three cases:\ensep
\ensep
Case~(i)\,: $x\rxi y\rxi z$.\ensep
In this case it suffices to notice that
\begin{alignat*}{3}
v_{xz}\,&=\,\max(v_{xy},v_{yz})\,&&=\,\max(v_{yx},v_{yz})\,&&=\,v_{yz},
\\[2.5pt]
v_{yz}\,&=\,\min(v_{yx},v_{xz})\,&&=\,\min(v_{xy},v_{xz})\,&&=\,v_{xz}
\end{alignat*}
where we are using successively from left to right: (\ref{eq:vequaltomax}) and  (\ref{eq:vequaltomin}), the assumed equality $v_{xy}=v_{yx}$, 
and (\ref{eq:impin}) with $y$ replaced by $z$.
\ensep
Case~(ii)\,: $z\rxi x\rxi y$.\ensep
This case is analogous to the preceding one, with the difference that $v_{xz}=v_{yz}$ relies on (\ref{eq:vequaltomin}) and $v_{zx}=v_{zy}$ relies on (\ref{eq:vequaltomax}).
\ensep
Case~(iii)\,: $x\rxi z\rxi y$.\ensep 
In this case, (\ref{eq:impin}) allows to write the following inequalities:
\begin{alignat*}{3}
v_{xy}\,&\ge\,v_{xz}\,&&\ge\,v_{yz}\,&&\ge\,v_{yx},
\\[2.5pt]
v_{yx}\,&\le\,v_{zx}\,&&\le\,v_{zy}\,&&\le\,v_{xy}.
\end{alignat*}
When $v_{xy}=v_{yx}$ all of them become equalities, which gives the desired result. This completes the proof of~(\ref{eq:impeq}).

\smallskip
The if part of statement (\buu) relies also on (\ref{eq:impeq}): If $v_{xy}=v_{yx}$, then we have $v_{xz}=v_{yz}$ for any $z\not\in\{x,y\}$,
which results in $\rho_x=\rho_y$ because of (\ref{eq:rhodif}). The only-if part of (\buu) is contained in (\ciu).
\ensep
Concerning statement~(\duu), the implication $\rho_x>\rho_y \,\Rightarrow\, v_{xy}>v_{yx}$ follows easily from~(\cuu) together with~(\buu),
whereas the implication $\rho_x\ge\rho_y \,\Rightarrow\, v_{xy}\ge v_{yx}$ is contained in~(\cuu) together with~(\ciu).
\ensep
Finally, in order to obtain~(\fuu) it suffices to combine (\duu) with Proposition~\ref{st:existencia-X}.
\end{proof}




\paragraph{3.3}
In this paragraph we look at the compatibility between strengths and mean preference scores.

In this connection, Zermelo proved that in the complete (and irreducible) case the strengths always order the options in exactly the same way as the mean preference scores~\cite[\secpar 4]{ze}.

This compatibility easily disappears in the general incomplete case. However, it remains true for CLC~matrices:



\smallskip
\begin{theorem}\hskip.5em
\label{st:phis}
For a CLC~matrix, the associated mean preference scores $\rho_x$ and strengths $\flr_x$ have the following compatibility properties:

\iim{a} $\flr_x > \flr_y \,\Longrightarrow\, \rho_x \,>\, \rho_y$.

\iim{b} $\rho_x \,>\, \rho_y \,\Longrightarrow\, \hbox{either\, }\flr_x > \flr_y \hbox{ \,or\, }\flr_x = \flr_y = 0$.
\end{theorem}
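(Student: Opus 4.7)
The plan is to reduce to the case where both $x$ and $y$ lie in the top dominant irreducible component $\xst$ given by Proposition~\ref{st:existencia-X}; recall that by Theorem~\ref{st:zermelore}(b) one has $\flr_a>0$ if and only if $a\in\xst$. For~(a), $\flr_x>\flr_y$ forces $x\in\xst$, and if $y\notin\xst$ then $\rho_x>\rho_y$ by Lemma~\ref{st:obsRosa}(f). For~(b), $\rho_x>\rho_y$ rules out the case $x\notin\xst$ with $y\in\xst$ (via Lemma~\ref{st:obsRosa}(f) applied to $(y,x)$), and yields the conclusion directly when $x\in\xst,\,y\notin\xst$ (giving $\flr_x>0=\flr_y$) or when $x,y\notin\xst$ (giving $\flr_x=\flr_y=0$). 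So both parts reduce to the main case $x,y\in\xst$; fix an admissible order~$\xi$.

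The key step is the following monotonicity claim: if $x,y\in\xst$ with $x\rxi y$, then $\flr_x\ge\flr_y$. To prove it, I would let $\flr'$ be obtained from $\flr$ by swapping its $x$- and $y$-components; a direct expansion of~(\ref{eq:funcioF}) gives
\[
\log F(\flr)-\log F(\flr') \,=\, m_{xy}\,\alpha \,+\, \sum_{z\neq x,y}\Big[(v_{xz}-v_{yz})\,\alpha\,-\,(t_{xz}-t_{yz})\,\beta_z\Big],
\]
where $\alpha=\log\flr_x-\log\flr_y$ and $\beta_z=\log(\flr_x+\flr_z)-\log(\flr_y+\flr_z)$. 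Suppose, for contradiction, that $\flr_x<\flr_y$, so $\alpha<0$ and $\alpha\le\beta_z\le 0$ (adding a common positive $\flr_z$ to the numerator and denominator of a fraction less than $1$ pushes it toward $1$). Combining this with the CLC inequalities $m_{xy}\ge 0$, $v_{xz}\ge v_{yz}$, $v_{zx}\le v_{zy}$, $t_{xz}\ge t_{yz}$ from Lemma~\ref{st:pattern} and~(\ref{eq:vxyinequality}), each bracket is bounded above by $(v_{zy}-v_{zx})\alpha$, hence
\[
\log F(\flr)-\log F(\flr') \,\le\, \alpha\,\Big[m_{xy}+\sum_{z\neq x,y}(v_{zy}-v_{zx})\Big] \,\le\, 0,
\]
since $\alpha<0$ while the bracket is nonnegative. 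But $\flr$ is the unique maximizer of $F$ on $\xst$ (Theorem~\ref{st:zermelore}(a) applied to the restriction to $\xst$), so $F(\flr)=F(\flr')$ forces $\flr'=\flr$, contradicting $\flr_x<\flr_y$.

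With the monotonicity claim in hand, both conclusions follow from subtracting the two Zermelo equations~(\ref{eq:fratesZ}) for $x$ and for $y$ to obtain
\[
(N-1)(\rho_x-\rho_y) \,=\, t_{xy}\,\frac{\flr_x-\flr_y}{\flr_x+\flr_y} \,+\, \sum_{z\neq x,y}\left[t_{xz}\,\frac{\flr_x}{\flr_x+\flr_z}\,-\,t_{yz}\,\frac{\flr_y}{\flr_y+\flr_z}\right].
\]
For~(a), $\flr_x>\flr_y$ combined with the monotonicity claim applied to $(y,x)$ excludes $y\rxi x$, so $x\rxi y$; then $t_{xy}>0$ by Proposition~\ref{st:existencia-X} makes the first term strictly positive, while each summand is nonnegative because $t_{xz}\ge t_{yz}$ and $\flr_x/(\flr_x+\flr_z)\ge\flr_y/(\flr_y+\flr_z)$. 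For~(b), Lemma~\ref{st:obsRosa}(d,e) give $m_{xy}>0$ and $x\rxi y$, so monotonicity yields $\flr_x\ge\flr_y$; if equality held, the identity would specialize to $(N-1)(\rho_x-\rho_y)=\sum(t_{xz}-t_{yz})\flr_x/(\flr_x+\flr_z)\le\sum(v_{xz}-v_{yz})=(N-1)(\rho_x-\rho_y)-m_{xy}$, forcing $m_{xy}\le 0$, a contradiction. The main obstacle is the swap computation in the monotonicity claim: the log-concavity bound $\alpha\le\beta_z$ is precisely what is needed to convert the ``wrong-direction'' CLC inequality $t_{xz}\ge t_{yz}$ into usable sign control on $\log F(\flr)-\log F(\flr')$.
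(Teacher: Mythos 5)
Your proof is correct, and it reorganizes the argument in a way that genuinely differs from the paper's. The shared core is the swap trick: both you and the paper compare $F(\flr)$ with its value at the vector obtained by exchanging the $x$- and $y$-components, and both use the CLC inequalities to force every factor of the ratio to be at least $1$ — your expansion in terms of $\alpha$, $\beta_z$ and $t_{xz}-t_{yz}$ is exactly the logarithm of the paper's expression~(\ref{eq:canviphis-aF}), regrouped via $t_{xz}-t_{yz}=(v_{xz}-v_{yz})-(v_{zy}-v_{zx})$. The difference lies in what the swap is keyed to and in how part~(b) is handled. The paper runs the swap under the hypotheses $\flr_x>\flr_y$ and $\rho_x\le\rho_y$, controlling the exponents through Lemma~\ref{st:obsRosa}(c), which settles~(a) directly; for~(b) it makes a separate first-order perturbation $\flr_x\mapsto\flr_x+\epsilon$, $\flr_y\mapsto\flr_y-\epsilon$ and checks that the derivative of $\log F$ at $\epsilon=0$ is positive. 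You instead key the swap to the admissible order, obtaining the clean monotonicity statement that $x\rxi y$ implies $\flr_x\ge\flr_y$ on the top component, and then extract both~(a) and~(b) from the subtracted stationarity equations~(\ref{eq:fratesZ}); in particular your part~(b) replaces the derivative computation by the chain $(t_{xz}-t_{yz})\,\flr_x/(\flr_x+\flr_z)\le t_{xz}-t_{yz}\le v_{xz}-v_{yz}$, which pins the contradiction on $m_{xy}>0$. What your route buys is a reusable qualitative lemma (the strengths are monotone along any admissible order) and a derivative-free treatment of~(b); what the paper's buys is that each part is self-contained and the hypotheses on $\rho$ enter only through Lemma~\ref{st:obsRosa}(b,c), never through the order $\xi$ itself. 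One cosmetic point: the uniqueness of the maximizer that you invoke at the end of the monotonicity claim is Theorem~\ref{st:zermeloirre}(a) applied to the irreducible block $\vxx$ (to which Theorem~\ref{st:zermelore}(b) reduces the problem), rather than Theorem~\ref{st:zermelore}(a) itself; this does not affect the argument.
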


\begin{proof}\hskip.5em
In the following $\xst$ denotes again the top dominant component of the Llull matrix, whose existence has been established by Proposition~\ref{st:existencia-X}. By Theorem~\ref{st:zermelore}, we know that $\flr_x>0$ \ifoi $x\in\xst$.
\ensep
Let us begin by noticing that both statements of the present theorem hold if $\flr_y=0$, that is, if $y\not\in X$. In this case statement~(b) is trivial, while statement~(a) holds because of Lemma~\ref{st:obsRosa}.(\fuu).
\ensep
Consider now the case $\flr_x=0$. In this case statement~(a) is empty, whereas statement~(b) reduces, via its contrapositive, to Lemma~\ref{st:obsRosa}.(\fuu) (with $x$ and $y$ interchanged with each other).

\halfsmallskip  
So, from now on, we can assume that $x$ and $y$ are both in $\xst$, or, on account of Theorem~\ref{st:zermelore}, that $\xst=\ist$.
In the following we will make use of the results of \secpar{2}, according to which the strengths $(\flr_x)$ are determined by the condition of maximizing the function (\ref{eq:funcioF}) under the restriction (\ref{eq:fratesaZ}), and that they satisfy the equations (\ref{eq:fratesZ}).

\halfsmallskip
Part~(a): It will be proved by seeing that a simultaneous occurrence of the inequalities $\flr_x > \flr_y$ and $\rho_x \le \rho_y$  would entail a contradiction with the fact that $\flr$ is the unique maximizer of $F(\flr)$. More specifically, we will see that one would have $F(\flrbis)\ge F(\flr)$ where $\flrbis$ is obtained from $\flr$ by interchanging the values of $\flr_x$ and $\flr_y$, that is
\begin{equation}
\label{eq:intercanvi}
\flrbis_z \,=\,
\begin{cases}
\flr_y, &\text{if }z=x,\\
\flr_x, &\text{if }z=y,\\
\flr_z, &\text{otherwise.}\\
\end{cases}
\end{equation}
In fact, $\flrbis$ differs from $\flr$ only in the components  associated with $x$ and $y$, so that
\begin{equation}
\begin{split}
\label{eq:quocientFs}
\frac{F(\flrbis)}{F(\flr)} \,=\,
&\left(\frac{\flrbisx}{\flr_x}\right)^{\hskip-3ptv_{xy}}
\prod_{z\ne x,y}\,
\left(\frac{\flrbisx/(\flrbisx+\flr_z)}{\flr_x/(\flr_x+\flr_z)}
\right)^{\hskip-3ptv_{xz}}
\left(\frac{\flr_x+\flr_z}{\flrbisx+\flr_z}\right)^{\hskip-3ptv_{zx}}\\
\times
&\left(\frac{\flrbisy}{\flr_y}\right)^{\hskip-3ptv_{yx}}
\prod_{z\ne x,y}\,
\left(\frac{\flrbisy/(\flrbisy+\flr_z)}{\flr_y/(\flr_y+\flr_z)}
\right)^{\hskip-3ptv_{yz}}
\left(\frac{\flr_y+\flr_z}{\flrbisy+\flr_z}\right)^{\hskip-3ptv_{zy}}.
\end{split}
\end{equation}
More particularly, in the case of (\ref{eq:intercanvi}) this expression becomes
\begin{equation}
\label{eq:canviphis-aF}
\frac{F(\flrbis)}{F(\flr)} \,=\,
\left(\frac{\flr_y}{\flr_x}\right)^{\hskip-3ptv_{xy}-v_{yx}}\,
\prod_{z\ne x,y}\,
\left(\frac{\flr_y/(\flr_y+\flr_z)}{\flr_x/(\flr_x+\flr_z)}
\right)^{\hskip-3ptv_{xz}-v_{yz}}
\left(\frac{\flr_y+\flr_z}{\flr_x+\flr_z}\right)^{\hskip-3ptv_{zy}-v_{zx}},
\end{equation}
where all of the bases are strictly less than~$1$, since
$\flr_x > \flr_y$, and all of the the exponents are non-positive, because of Lemma~\ref{st:obsRosa}.(\cuu,\,\ciu).
Therefore, the product is greater than or equal to $1$, as claimed.

\newcommand\flro{\omega}
\newcommand\incr{\epsilon}

\halfsmallskip
Part~(b): Since we are assuming $x,y\in\xst$, it is a matter of proving that $\rho_x > \rho_y \,\Rightarrow\, \flr_x > \flr_y$. On the other hand, by making use of the contra\-positive of~(a), the problem reduces to proving that $\flr_x = \flr_y \,\Rightarrow\, \rho_x = \rho_y$.

Similarly to above, this implication will be proved by seeing that a simultaneous occurrence of the equality $\flr_x=\flr_y=:\flro$ together with the inequal\-ity $\rho_x > \rho_y$ (by symmetry it suffices to consider this one) would entail a contradiction with the fact that $\flr$ is the unique maximizer of $F(\flr)$.\linebreak[3]
More specifically, here we will see that one would have $F(\flrbis)>F(\flr)$ where $\flrbis$ is obtained from $\flr$ by slightly increasing $\flr_x$ while decreasing $\flr_y$, that is
\begin{equation}
\label{eq:dissociacio}
\flrbisz \,=\,
\begin{cases}
\flro + \incr, &\text{if }z=x,\\
\flro - \incr, &\text{if }z=y,\\
\flr_z, &\text{otherwise.}\\
\end{cases}
\end{equation}
This claim will be proved by checking that
\begin{equation}
\label{eq:depspositiva}
\left.
\frac{\textup{d}\hphantom{\incr}}{\textup{d}\incr}\,
\log \frac{F(\flrbis)}{F(\flr)}
\,\right|_{\epsilon=0} \,>\, 0.
\end{equation}
In fact, (\ref{eq:quocientFs}) entails that
\begin{equation}
\begin{split}
\log\,\frac{F(\flrbis)}{F(\flr)} \,=\,\,\,
&C \,+\, v_{xy}\log\flrbisx + v_{yx}\log\flrbisy\\
+ &\sum_{z\ne x,y}\,\Big(
v_{xz}\log\frac{\flrbisx}{\flrbisx+\flr_z}
+ v_{yz}\log\frac{\flrbisy}{\flrbisy+\flr_z}\Big)\\
- &\sum_{z\ne x,y}\,\Big(
v_{zy}\log(\flrbisy+\flr_z)
+ v_{zx}\log(\flrbisx+\flr_z)\Big),
\end{split}
\end{equation}
where $C$ does not depend on $\incr$. Therefore, in view of (\ref{eq:dissociacio}) we get
\begin{equation}
\begin{split}
\left.
\frac{\textup{d}\hphantom{\incr}}{\textup{d}\incr}\,
\log \frac{F(\flrbis)}{F(\flr)}
\,\right|_{\epsilon=0}
\,=\,\,\,
(v_{xy}-v_{yx})\,\frac{1}{\flro}
\,\,+\, &\sum_{z\ne x,y}\,
(v_{xz}-v_{yz})\,\frac{\flr_z}{\flro(\flro+\flr_z)}\\
+\, &\sum_{z\ne x,y}\,
(v_{zy}-v_{zx})\,\frac{1}{\flro+\flr_z}.
\end{split}
\end{equation}
Now, according to parts (\cuu) and (\duu) of Lemma~\ref{st:obsRosa}, the assumption that $\rho_x>\rho_y$ implies the inequalities $v_{xy}>v_{yx}$, $v_{xz}\ge v_{yz}$ and $v_{zy}\ge v_{zx}$, which result indeed in~(\ref{eq:depspositiva}).
\end{proof}

\paragraph{3.4}
It is interesting to notice that a CLC~matrix keeps an important part of this structure when passing to the relative scores $q_{xy}=v_{xy}/t_{xy}$:

\begin{proposition}\hskip.5em
\label{st:relativescores}
Assume that $(v_{xy})$ is a CLC~matrix. If one has $t_{xy}>0$ for all $x,y$, then the relative scores $q_{xy}=v_{xy}/t_{xy}$ have the following properties, where $\xi$ is any admissible order for $(v_{xy})$:
\begin{alignat}{2}
\label{eq:qxyinequality}
&q_{xy} \,\ge\, q_{yx},
\qquad &&\hbox{whenever $x\rxi y$},
\\[2.5pt]
\label{eq:qinequalities}
&q_{xz} \,\ge\, q_{yz},\quad q_{zx} \,\le\, q_{zy},\qquad
&&\hbox{whenever $x\rxi y$ and $z\not\in\{x,y\}$}.
\end{alignat}
Besides, the top dominant irreducible component $\xst$ of $(v_{xy})$
is also top dominant irreducible for $(q_{xy})$,
with the special property that
\begin{equation}
\label{eq:irred-quo-positivitat}
q_{xy}\,>\,0,\qquad\mbox{whenever $x\in\xst$ and $y\ne x$.}
\end{equation}
If one has $t_{xy}=0$ for some $x,y$, then there exists $\yst\subseteq\ist$ such that 
\begin{align}
\label{eq:txx}
&t_{x\bar x}>0,\qquad \mbox{whenever\,  $x,\bar x\not\in\yst$,}\\[2.5pt]
\label{eq:zeroes}
&v_{yx}=0,\qquad\mbox{whenever\, $y\in\yst$ and\, $x\ne y$.}
\end{align}
\end{proposition}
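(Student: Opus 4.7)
My plan is to handle the two cases separately. Case~1 is essentially a direct translation from the already-established $v$-inequalities to their $q$-analogues, whereas case~2 requires tracking how the vanishing turnouts propagate along an admissible order.

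For case~1, where every $t_{xy}>0$: Since $q_{xy}+q_{yx}=1$, the inequality $q_{xy}\ge q_{yx}$ is equivalent to $v_{xy}\ge v_{yx}$, so (\ref{eq:qxyinequality}) is immediate from (\ref{eq:vxyinequality}). Clearing denominators, $q_{xz}\ge q_{yz}$ reduces to $v_{xz}\,v_{zy}\ge v_{yz}\,v_{zx}$, and the inequalities $v_{xz}\ge v_{yz}\ge 0$ and $v_{zy}\ge v_{zx}\ge 0$ supplied by Lemma~\ref{st:pattern} give this at once; the companion $q_{zx}\le q_{zy}$ rearranges to the same relation. For the dominance statement, Proposition~\ref{st:existencia-X} supplies an $\xst$ with $v_{xy}>0$ whenever $x\in\xst$ and $y\ne x$, while (\ref{eq:irred-projectada-zeros}) gives $v_{yx}=0$ whenever $y\notin\xst$ and $x\in\xst$; since every $t_{xy}>0$, the sign pattern of $(q_{xy})$ coincides with that of $(v_{xy})$, so both (\ref{eq:irred-quo-positivitat}) and the top dominant irreducibility of $\xst$ for $(q_{xy})$ transfer verbatim.

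For case~2, where some $t_{xy}=0$, I would fix any admissible order $\xi$, list the options as $\ell_1\rxi\cdots\rxi\ell_N$, and let $m$ be the largest integer in $\{1,\ldots,N\}$ such that $t_{\ell_i\ell_j}>0$ for all distinct $i,j\le m$. The hypothesis forces $m<N$, so the choice $\yst:=\{\ell_{m+1},\ldots,\ell_N\}$ is nonempty and (\ref{eq:txx}) is automatic from the very definition of $m$. The main work is in (\ref{eq:zeroes}). As a preliminary, I would show $v_{\ell_m\ell_j}=v_{\ell_j\ell_m}=0$ for every $j>m$: by maximality of $m$ there exists $i\le m$ with $t_{\ell_i\ell_{m+1}}=0$, and applying (\ref{eq:tinequalities}) from Lemma~\ref{st:pattern} first in one index and then in the other gives $t_{\ell_m\ell_{m+1}}=0$ and then $t_{\ell_m\ell_j}=0$ for all $j>m$.

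It then remains to verify $v_{\ell_k\ell_j}=0$ for every $k>m$ and every $j\ne k$, which I would split into three sub-cases on $j$. When $j=m$, the claim is just $v_{\ell_k\ell_m}\le v_{\ell_m\ell_k}=0$ from (\ref{eq:vxyinequality}). When $j<m$, the ordered triple $\ell_j\rxi\ell_m\rxi\ell_k$ and identity (\ref{eq:vequaltomin}) give $v_{\ell_k\ell_j}=\min(v_{\ell_k\ell_m},v_{\ell_m\ell_j})\le v_{\ell_k\ell_m}=0$. When $j>m$, setting $p=\min(j,k)$ and $q=\max(j,k)$, the triple $\ell_m\rxi\ell_p\rxi\ell_q$ and identity (\ref{eq:vequaltomax}) yield $0=v_{\ell_m\ell_q}=\max(v_{\ell_m\ell_p},v_{\ell_p\ell_q})$, so $v_{\ell_p\ell_q}=0$, and then (\ref{eq:vxyinequality}) forces $v_{\ell_q\ell_p}=0$ as well. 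The main potential obstacle in case~2 is bookkeeping: one must pick the triple of indices so that the max/min identities (\ref{eq:vequaltomax})--(\ref{eq:vequaltomin}) single out the entry to kill, but once one notices that the row and column of $\ell_m$ both vanish on the strip indexed by $\{\ell_{m+1},\ldots,\ell_N\}$, every required entry becomes reachable in one such step.
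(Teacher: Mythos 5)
Your proof is correct and follows essentially the same route as the paper's: in the positive-turnout case you translate the $v$-inequalities of Lemma~\ref{st:pattern} into the $q$-inequalities (your cross-multiplied form $v_{xz}v_{zy}\ge v_{yz}v_{zx}$ is the same computation as the paper's chain of equivalences, and has the small advantage of not dividing by possibly vanishing scores), and in the degenerate case you locate the first vanishing consecutive turnout via $(\ref{eq:tinequalities})$ and propagate zeros with $(\ref{eq:vequaltomax}$--$\ref{eq:vequaltomin})$. The only difference is that your $\yst$ omits the borderline element $\ell_m$ which the paper's choice includes; since the proposition only asserts the existence of some $\yst$ satisfying $(\ref{eq:txx})$ and $(\ref{eq:zeroes})$, and yours does, this is immaterial.
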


\begin{proof}\hskip.5em
Consider first the case where $t_{xy}>0$ for all $x,y$. Clearly, (\ref{eq:vxyinequality}) immediately implies (\ref{eq:qxyinequality}). On the other hand, (\ref{eq:vinequalities}) implies (\ref{eq:qinequalities}) because of the following chains of implications:
\begin{align}
\label{eq:relativeinequalitiesA}
\frac{v_{xz}}{t_{xz}}\ge \frac{v_{yz}}{t_{yz}} \,\Longleftrightarrow\,
\frac{t_{xz}}{v_{xz}}\le \frac{t_{yz}}{v_{yz}} \,\Longleftrightarrow\,
1+  \frac{v_{zx}}{v_{xz}} \le 1+\frac{v_{zy}}{v_{yz}},
\\[2.5pt]
\label{eq:relativeinequalitiesB}
\frac{v_{zx}}{t_{zx}}\le \frac{v_{zy}}{t_{zy}} \,\Longleftrightarrow\,
\frac{t_{zx}}{v_{zx}}\ge \frac{t_{zy}}{v_{zy}} \,\Longleftrightarrow\,
1+  \frac{v_{xz}}{v_{zx}} \ge 1+\frac{v_{yz}}{v_{zy}}.
\end{align}
The statement about the top dominant irreducible component is also an immediate consequence of the positivity of the turnouts.

If $t_{xy}\!=\!0$ for some $x,y$, then (\ref{eq:tinequalities}) allows to derive that $t_{pp'}=0$ for some $p$. If $p_1$ is the first element with this property and we set $\yst=\{y\in\ist\mid p_1\rxieq y\}$, we immediately obtain (\ref{eq:txx}), and (\ref{eq:vinequalities}) together with (\ref{eq:vequaltomax}) are easily seen to lead to~(\ref{eq:zeroes}).
\end{proof}

\smallskip
As a consequence, we can see that the mean relative preference scores
\begin{equation}
\label{eq:srates}
\sigma_x \,=\, {\frac{\hbox{\small1}}{\hbox{\small{$N-1$}}}}\,\sum_{y\neq x} v_{xy}/t_{xy}
\end{equation}
rank the items in the same way as the original mean preference scores:

\smallskip
\begin{corollary}\hskip.5em
\label{st:rs}
Assume that $(v_{xy})$ is a CLC~matrix with positive turnouts.
In that case, one has $\sigma_x > \sigma_y$ \ifoi $\rho_x > \rho_y$.
\end{corollary}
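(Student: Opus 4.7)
The plan is to prove the stronger claim that $\sigma$ and $\rho$ induce the same weak order on $\ist$; the corollary is then immediate. I would split into cases according to how $\rho_x$ compares to $\rho_y$, the tie case being the delicate one.

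\textbf{Strict case.} Suppose $\rho_x > \rho_y$. By Lemma~\ref{st:obsRosa}(d,\,e), $v_{xy}>v_{yx}$ and $x\rxi y$. Positivity of $t_{xy}$ gives $q_{xy}>q_{yx}$, and Proposition~\ref{st:relativescores}, specifically (\ref{eq:qinequalities}), gives $q_{xz}\ge q_{yz}$ for every $z\ne x,y$. Summing,
\[
(N-1)(\sigma_x-\sigma_y) \,=\, (q_{xy}-q_{yx}) \,+\, \sum_{z\ne x,y}(q_{xz}-q_{yz}) \,>\, 0.
\]
The reverse strict case $\rho_x<\rho_y$ follows by swapping $x$ and $y$.

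\textbf{Tie case.} Suppose $\rho_x=\rho_y$ with $x\ne y$; I would show $\sigma_x=\sigma_y$. Without loss of generality assume $x\rxi y$ and consider the $\xi$-consecutive chain $x=x_0\rxi x_1\rxi\cdots\rxi x_n=y$. By Lemma~\ref{st:obsRosa}(a) the $\rho_{x_i}$ are monotone along the chain, so equality of the endpoints forces them all to coincide. Lemma~\ref{st:obsRosa}(b) then gives $v_{x_ix_{i+1}}=v_{x_{i+1}x_i}$, i.e.\ $m_{x_ix_{i+1}}=0$, whence (\ref{eq:ttm}) yields $t_{x_iz}=t_{x_{i+1}z}$ for every $z\notin\{x_i,x_{i+1}\}$. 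Applying Lemma~\ref{st:obsRosa}(c) to both $\rho_x\ge\rho_y$ and $\rho_y\ge\rho_x$ gives $v_{xz}=v_{yz}$ for every $z\ne x,y$, and trivially $v_{xy}=v_{yx}$.

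It remains to propagate $t_{xz}=t_{yz}$ for $z\ne x,y$. When $z$ lies outside the chain, iterating the step-equalities directly gives $t_{x_0z}=\cdots=t_{x_nz}$. When $z=x_j$ with $0<j<n$ the direct iteration collides with $z$, and this is the main obstacle. I would circumvent it by exploiting the symmetry of $t$: keeping $x_0$ fixed and walking the chain from $x_j$ to $x_n$ gives
\[
t_{x_0x_j} \,=\, t_{x_jx_0} \,=\, t_{x_{j+1}x_0} \,=\, \cdots \,=\, t_{x_nx_0} \,=\, t_{x_0x_n},
\]
and symmetrically, walking from $x_j$ to $x_0$ with $x_n$ fixed, $t_{x_nx_j}=t_{x_0x_n}$. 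Once $v_{xz}=v_{yz}$ and $t_{xz}=t_{yz}$ are in hand, every $q$-difference in $\sigma_x-\sigma_y$ vanishes, so $\sigma_x=\sigma_y$. Combining the three cases yields the claim; all of the heavy lifting reduces to Lemma~\ref{st:obsRosa}, Proposition~\ref{st:relativescores}, and the symmetry trick just described.
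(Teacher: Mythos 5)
Your proof is correct, but it reaches the conclusion by a longer road than the paper's, and the part you identify as ``the main obstacle'' is in fact a non-issue. The paper derives the equivalence from two implications: $\rho_x\ge\rho_y\Rightarrow\sigma_x\ge\sigma_y$ (from Lemma~\ref{st:obsRosa}(c) together with the algebraic equivalences (\ref{eq:relativeinequalitiesA})) and $\rho_x>\rho_y\Rightarrow\sigma_x>\sigma_y$ (adding the strict term $q_{xy}>q_{yx}$ from Lemma~\ref{st:obsRosa}(d)); the ``if'' direction is then just the contrapositive of the weak implication. Your strict case is essentially the paper's second implication, routed through Proposition~\ref{st:relativescores} instead of (\ref{eq:relativeinequalitiesA}) --- a harmless difference. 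Where you diverge is in replacing the weak implication by the tie case $\rho_x=\rho_y\Rightarrow\sigma_x=\sigma_y$ and then deducing the equivalence by trichotomy; this is logically fine, but the machinery you deploy there is superfluous. Lemma~\ref{st:obsRosa}(c), applied to both $\rho_x\ge\rho_y$ and $\rho_y\ge\rho_x$, gives not only $v_{xz}=v_{yz}$ but also $v_{zx}=v_{zy}$ for every $z\ne x,y$ (both inequalities of (\ref{eq:vinequalities}) are part of its conclusion), and hence
\begin{equation*}
t_{xz}\,=\,v_{xz}+v_{zx}\,=\,v_{yz}+v_{zy}\,=\,t_{yz}
\end{equation*}
immediately. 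The entire apparatus of the $\xi$-consecutive chain, the propagation of turnouts via (\ref{eq:ttm}), and the symmetry trick for $z=x_j$ on the chain can therefore be deleted without loss; the collision you worked around never needs to arise. What your version buys, at this extra cost, is the slightly stronger explicit statement that $\sigma$ and $\rho$ induce the same weak order (ties go to ties), which the paper's two implications also yield but do not state.
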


\begin{proof}\hskip.5em
In order to prove the stated equivalence, it suffices to prove the two following implications:
\begin{gather}
\label{eq:rsa}
\rho_x \ge \rho_y \ensep\Longrightarrow\ensep \sigma_x \ge \sigma_y,
\\[2.5pt]
\label{eq:rsb}
\rho_x > \rho_y \ensep\Longrightarrow\ensep \sigma_x > \sigma_y.
\end{gather}
The implication~(\ref{eq:rsa}) is easily obtained by combining parts~(\cuu,\,\ciu) of Lemma~\ref{st:obsRosa} with 
the chain of implications (\ref{eq:relativeinequalitiesA}).
\ensep
In order to prove~(\ref{eq:rsb}) it suffices to notice that, according to part~(\duu) of Lemma~\ref{st:obsRosa}, $\rho_x > \rho_y$ implies $v_{xy}>v_{yx}$ and therefore $v_{xy}/t_{xy}>v_{yx}/t_{yx}$.
\end{proof}

\smallskip
\remark
If $t_{xy}=0$ for some $x,y$, the last statement of Proposition~\ref{st:relativescores} justifies considering any $y\in\yst$ categorically worse than any $x\not\in\yst$, and restricting the rating to the subset $\xst=\ist\setminus\yst$, which brings the problem to the case of positive turnouts.

\newcommand\zmap{Z}

\section{The CLC projection followed by Zermelo's method of strengths}

In this section we consider the rating method that is obtained by composing the CLC projection of \cite{crc,cri} and Zermelo's method of strengths.\linebreak 
That is, we consider the mapping $\Phi = \zmap P$, where $P$ denotes the CLC~projection mapping $(v_{xy})\mapsto(\psc_{xy})$ and $\zmap$ denotes the mapping defined by the method of strengths. 
As we will see, the properties of $\zmap$ obtained in the present article (Section~\ref{sec:zms}) combine with those of~$P$ (Section~\ref{sec:clc}) to ensure that the resulting rating method enjoys the properties that we claimed in the introduction.

We will use the following notations: $(v_{xy})$ denotes the original Llull matrix, $(\psc_{xy})$~denotes the projected one, and $(\flr_x)$ denotes the final strengths. We will refer to the latter as the \dfd{CLC-Zermelo fractions}.

\bigskip
The next result establishes the property of single-choice voting consistency.

\begin{theorem}\hskip.5em
\label{st:singlechoice}
When each ballot confines to choosing a single option, the CLC-Zermelo fractions coincide with the respective vote fractions.
\end{theorem}

\begin{proof}\hskip.5em
As it was mentioned in \secpar{1.1}, in the case of single-choice voting one has $v_{xy}=\scf_x$, and therefore $t_{xy}=\scf_x+\scf_y$, for every $y\neq x$. 
Such matrices have CLC~structure, so they are invariant by the CLC~projection:
$\psc_{xy}=v_{xy}=\scf_x$. By~plugging these values in~(\ref{eq:fratesZ}--\ref{eq:fratesaZ}), one easily sees that these equations are satisfied by taking $\flr_x=\scf_x$.
\end{proof}

\smallskip
Let us consider now the property of unanimous decomposition.
This property is concerned with \dfd{unanimously preferred sets}, \ie subsets $\xst$ of options with the property that each member of~$\xst$ is~unanimously preferred to any alternative from outside $\xst$.

\begin{theorem}\hskip.5em
\label{st:decomposition}
\textup{(a)}~The CLC-Zermelo fractions vanish outside of any unanimously preferred set.
\textup{(b)}~In the complete case, the options that get non-vanishing CLC-Zermelo fractions
form a minimal unanimously preferred set.
\end{theorem}

\begin{proof}\hskip.5em
The proof hinges on comparing the set under consideration with the top dominant irreducible component of the projected Llull matrix~$(\psc_{xy})$, whose existence is guaranteed by Proposition~\ref{st:existencia-X}. In the following, this top dominant irreducible component is denoted by $\xsth$.

\halfsmallskip
Part~(a).\ensep
Let $\xst$ be an unanimously preferred set. 
By \cite[Lem.~7.1]{cri}, the hypothesis that $v_{xy}=1$ for all $x\in\xst$ and $y\not\in\xst$ implies $\psc_{xy}=1$, and therefore $\psc_{yx}=0$, for all such pairs.\ensep
This entails that $\xsth\sbseteq\xst$, which leads to the claimed conclusion since Theorem~\ref{st:zermelore} ensures that $\flr_y=0$ for any $y\not\in\xsth$.

\halfsmallskip
Part~(b).\ensep
Let $\xst$ be the set of options with non-vanishing CLC-Zermelo fractions.
We claim that $\xst=\xsth$.
In~fact, otherwise Theorem~\ref{st:zermelore} would imply the existence of some $x\in\xst$ with $\flr_x=0$ or~some $y\in\yst=\ist\setminus\xst$ with $\flr_y>0$.

In particular, we have $\psc_{yx}=0$ for~all $x\in\xst$ and $y\in\yst$. Because of the completeness assumption, this implies that $\psc_{xy}=1$ and \hbox{---by \cite[Lem.~7.1]{cri}---} $v_{xy}=1$ for all those pairs. So $\xst$ is an unanimously preferred set.

Finally, let us see that $X$ is minimal for this property: If we had $\xstp\sbset\xst$ satisfying $v_{\xh\yh}=1$ for all $\xh\in\xstp$ and $\yh\in\ystp=\ist\setminus\xstp$, then \cite[Lem.~9.1]{crc} would give $\psc_{\xh\yh}=1$ and therefore $\psc_{\yh\xh}=0$ for all such pairs, so $\xst$ could not be the top dominant irreducible
component of the matrix $(\psc_{xy})$.
\end{proof}


Still in connection with the property of unanimous decomposition,
the following proposition includes a special case of incompleteness that has practical interest.

\begin{proposition}\hskip.5em
\label{st:decomposition-proposition}
Assume that the individual votes are complete, or alternatively, that each of them is a ranking (possibly truncated or with ties). If~$\xst$~is a minimal unanimously preferred set, then the CLC-Zermelo fractions of~$\xst$ are all of them positive and they coincide with those that one obtains when the individual votes are restricted to~$\xst$.
\end{proposition}

\begin{proof}\hskip.5em
%
Let us begin by noticing that the CLC~structure of the projected Llull matrix ensures that 
$\pto_{xy}=1$ for all $x,y\in\xst$. In fact, the inequalities (\ref{eq:tinequalities}) allow to derive it from the known fact ---obtained in the proof of part~(a) of the preceding theorem--- that $\pto_{xy}=1$ for all $x\in\xst$ and $y\not\in\xst$.

Now we claim that under the present hypotheses, \ie either completeness or ranking character of the individual votes, one has $\xsth=\xst$. In fact, a strict inclusion $\xsth\sbset\xst$ would mean that $\psc_{x\xh}=0$ for any $x\in\xst\setminus\xsth$ and $\xh\in \xsth$.
By the remark of the preceding paragraph, this implies that $\psc_{\xh x}=1$ for all such pairs. Since we also have $\psc_{xy}=1$ for $x\in\xst$ and $y\not\in\xst$, we can conclude that $\psc_{\xh\yh}=1$ for all $\xh\in\xsth$ and $\yh\not\in\xsth$. Now, according to \cite[Lem.~9.1]{crc} (for the complete case) and \cite[Lem.~7.1]{cri} (for the case of rankings, which are certainly transitive), this implies that $v_{\xh\yh}=1$ for all such pairs. This contradicts the supposed minimality of $\xst$.

So, $\xst$ itself is the top dominant irreducible component of the matrix $(\psc_{xy})$.
By making use of Theorem~\ref{st:zermelore}, it follows that $\flr_x>0$ for all $x\in\xst$ and that they are the strengths determined by the restriction of the projected Llull matrix $(\psc_{xy})$ to the set $\xst$. In order to 
complete the proof, 
we must show that this restriction of the projected Llull matrix coincides with the projection of the same  restriction applied to the original Llull matrix $(v_{xy})$, \ie $\psc_{x\bar x}=\pscbis_{x\bar x}$ for~any $x,\bar x\in\xst$, where we are using a tilde to denote the objects associated with the matrix obtained by first restricting and then projecting. 
In order to establish this equality, it suffices to obtain analogous equalities for the corresponding margins and turnouts. Besides, by taking into account the way that the CLC~projection is defined, it suffices to obtain these equalities for $\bar x=x'$, namely the option that immediately follows $x$ in an admissible order $\xi$ ($\xst$ is easily seen to be a segment of~$\xi$). For the margins, this equality is obtained in \cite[Lem.~9.2]{crc}, whose proof is valid without any need for completeness. For the turnouts, this equality is immediately true in the complete case. In the case of ranking votes, it suffices to observe that the $\xst$ restriction of the original Llull matrix is complete. This is true because of the following implications:\ensep (i)~$v_{xy}=1$ for some $y\in\ist$ implies that
$x$~is explicitly mentioned in all of the ranking votes;\ensep
and\,~(ii)~$x$~being explicitly mentioned in all of the ranking
votes implies that $t_{xy}=1$ for any $y\in\ist.$
\end{proof}

\medskip
\begin{theorem}\hskip.5em
\label{st:continuity}
The CLC-Zermelo fractions depend continuously on the original Llull matrix.
\end{theorem}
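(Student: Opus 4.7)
The plan is to factor $\Phi = Z\circ P$ and handle each factor separately. For the inner map $P$ (the CLC projection $(v_{xy})\mapsto(\psc_{xy})$), continuity as a mapping between Llull matrices was established in \cite{crc, cri}, so I would simply invoke that. For the outer map $Z$ (Zermelo's method of strengths), I would invoke Theorem~\ref{st:zermelore}(c), which gives continuity on the set of matrices that admit a top dominant irreducible component. Continuity of $\Phi$ at $\vaa$ then follows from the continuity of a composition.

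The fact that lets these two pieces plug together is that, by Theorem~\ref{st:characterization}, the projected matrix $(\psc_{xy})$ always has CLC~structure, and hence, by Proposition~\ref{st:existencia-X}, it automatically has a top dominant irreducible component $\xsth$ as soon as it is non-vanishing. So $Z$ is in fact well defined on $P(\vaa)$, and Theorem~\ref{st:zermelore}(c) applies, provided that the hypothesis of a top dominant irreducible component persists in a neighbourhood of $(\psc_{xy})$.

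To secure that persistence I would re-use the argument already used inside the proof of Theorem~\ref{st:zermelore}(c): Proposition~\ref{st:existencia-X} in fact guarantees $\psc_{xy}>0$ for all $x\in\xsth$ and $y\ne x$, and this strict positivity is preserved by small perturbations; therefore $\xsth$ is contained in some irreducible component of the perturbed projected matrix, and the same domination argument as in Theorem~\ref{st:zermelore}(c) shows that this component is still top dominant. Combining continuity of $P$ with the continuity statement in Theorem~\ref{st:zermelore}(c) then gives $\Phi(\vaabis)\to\Phi(\vaa)$ as $\vaabis\to\vaa$, with the degenerate case of a vanishing projected matrix (which corresponds to a vanishing of all the preference scores) lying outside the natural scope of the method.

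The main subtlety, which has already been dealt with inside Theorem~\ref{st:zermelore}(c), is that the partition of $\ist$ into irreducible components is not locally constant: under a small perturbation the top dominant component may absorb previously dominated items, and continuity of the strengths then demands that the strengths on the newly absorbed items tend to zero. This is precisely the content of the maximizing-sequence analysis in Theorem~\ref{st:zermelore}, so no further work is needed beyond assembling the ingredients above.
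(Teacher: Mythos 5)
Your proposal is correct and follows essentially the same route as the paper, which likewise reduces the statement to the continuity of $P$ (from the cited companion papers) and the continuity of $Z$ given by Theorems~\ref{st:zermeloirre} and~\ref{st:zermelore}. The extra care you take to verify that the projected matrices always satisfy the hypotheses of Theorem~\ref{st:zermelore} (via Theorem~\ref{st:characterization} and Proposition~\ref{st:existencia-X}) is exactly the point the paper relies on, as noted in its Remark~2 after Theorem~\ref{st:zermelore}.
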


\begin{proof}\hskip.5em
This is a consequence of the continuity of the mappings $P$ and~$\zmap$. The former is guaranteed by \cite[Thm.\,6.1]{cri} and the latter by Theorems~\ref{st:zermelore} and \ref{st:zermeloirre}.
\end{proof}

\medskip
\begin{theorem}\hskip.5em
\label{st:majority}
The CLC-Zermelo fractions comply with the Condorcet-Smith principle:
If $\ist$ is partitioned in two sets $\xst$ and $\yst$ with the
property that $v_{xy} > 1/2$ for any $x\in\xst$ and $y\in\yst$,
\ensep
then for any such $x$ and $y$ one has either $\flr_x>\flr_y$ or $\flr_x=\flr_y=0$.
\end{theorem}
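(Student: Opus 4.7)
The plan is to translate the hypothesis from $(v_{xy})$ to the projected matrix $(\psc_{xy})$ and then combine Theorem~\ref{st:phis} with the structural facts of Lemma~\ref{st:obsRosa} and Theorem~\ref{st:zermelore}. Since $v_{xy} > 1/2$ together with (\ref{eq:sumle1}) forces $v_{yx} < 1/2 < v_{xy}$, the set $\xst$ strictly pairwise-majority-dominates $\yst$ in the original matrix. The crucial preliminary step is to transfer this property to the projected matrix by establishing $\psc_{xy} > \psc_{yx}$ for every $x\in\xst$ and $y\in\yst$. This preservation of strict pairwise majority under the CLC projection is to be drawn from \cite{crc,cri}, in the same spirit as the margin-based arguments invoked via \cite[Lem.~9.2]{crc} in the proof of Theorem~\ref{st:decomposition}. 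Once it is available, Lemma~\ref{st:obsRosa}.(d) applied to the projected matrix yields the strict inequality $\rho^\pi_x > \rho^\pi_y$ for every such pair, where $\rho^\pi$ stands for the mean preference scores of $(\psc_{xy})$.

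Next, let $\xsth$ denote the top dominant irreducible component of $(\psc_{xy})$ supplied by Proposition~\ref{st:existencia-X}; Theorem~\ref{st:zermelore} guarantees that $\flr_z > 0$ if and only if $z\in\xsth$. The argument then splits into three cases. If both $x$ and $y$ lie in $\xsth$, Theorem~\ref{st:phis}.(b) applied to $\rho^\pi_x > \rho^\pi_y$ yields $\flr_x > \flr_y$, the alternative branch of that theorem being excluded by the positivity of both strengths. If $x\in\xsth$ and $y\not\in\xsth$, one has trivially $\flr_x > 0 = \flr_y$. Finally, if $x\not\in\xsth$, then $y$ cannot belong to $\xsth$ either, since otherwise Lemma~\ref{st:obsRosa}.(f) would force $\rho^\pi_y > \rho^\pi_x$, contradicting what was established above; therefore $\flr_x = \flr_y = 0$ in this remaining case.

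The main obstacle will be the first step, namely pinning down from \cite{crc,cri} the precise statement that guarantees strict pairwise majorities survive the CLC projection. Should such a statement not be immediately available off the shelf, the fallback is to argue directly from the construction of the projection by combining the margin-preservation facts between contiguous pairs in an admissible order with the CLC-structure inequalities (\ref{eq:vxyinequality})--(\ref{eq:vinequalities}), iterated along a chain in $\xst\cup\yst$, to propagate strict majority across the partition. Once that ingredient is secured, the remainder of the proof is routine bookkeeping with the previously established tools.
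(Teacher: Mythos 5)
Your overall route is the same as the paper's: pass to the projected matrix, obtain the strict comparison of mean preference scores $\rho^\pi_x>\rho^\pi_y$ for all $x\in\xst$, $y\in\yst$, and then invoke Theorem~\ref{st:phis}. The step you flag as the ``main obstacle'' is exactly the one the paper disposes of by a single citation: \cite[Thm.\,8.1]{cri} states directly that, under the majority hypothesis on the \emph{original} matrix, the mean ranks (equivalently, the mean preference scores) of the \emph{projected} matrix satisfy the strict inequality for every such pair. So the paper does not go through your intermediate claim $\psc_{xy}>\psc_{yx}$ at all; it jumps straight to the mean-score comparison. That said, your intermediate claim is equivalent to the cited conclusion by Lemma~\ref{st:obsRosa}.(d) applied to the projected matrix, so you have correctly identified the content of the missing ingredient, and your fallback plan (propagating strict majority through the projection via the CLC-structure inequalities) is in the right spirit even if you have not carried it out. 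One further remark: your three-case analysis at the end is unnecessary. Theorem~\ref{st:phis}.(b) already delivers, from $\rho^\pi_x>\rho^\pi_y$, precisely the disjunction ``$\flr_x>\flr_y$ or $\flr_x=\flr_y=0$'' that the theorem asserts, so the discussion of $\xsth$ and the appeal to Theorem~\ref{st:zermelore} and Lemma~\ref{st:obsRosa}.(f) at that stage merely re-derives what part~(b) packages (indeed, that case analysis is how part~(b) itself is proved in Section~3).
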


\begin{proof}\hskip.5em
Let us assume that the original Llull matrix is in the situation considered by the Condorcet-Smith principle. According to \cite[Thm.\,8.1]{cri},
the mean ranks $R_x$ of the projected Llull matrix $(\psc_{xy})$ satisfy the inequality $R_x < R_y$ for any $x\in\xst$ and $y\in\yst$. In terms of the mean preference scores~$\rho_x$, which are related to the mean ranks $\arank{x}= R_x$ by the linear decreasing transformation~(\ref{eq:arank}), we get therefore $\rho_x > \rho_y$ for any such $x$ and $y$.
So, it suffices to combine that result with Theorem~\ref{st:phis} of the preceding section.
\end{proof}

\medskip
Let us assume that all the individual preferences are reversed,
or equivalently, that the Llull matrix is replaced by its transpose. 
As a result of such a transformation,
one would expect the final ranking to be reversed.
This condition is known in the literature by the name of \dfc{inversion} \cite{che:1998,gonz}.

Zermelo's method by itself is easily seen to satisfy this condition, at least in the irreducible case.
More precisely, in this case the strengths for the transposed matrix are proportional to $1/\flr_x,$
where $\flr_x$ are the strengths for the original matrix.
In the reducible case, the positive strengths move from the top-dominant component to the bottom-dominated one (whenever the latter exists).

The following results establish the inversion property for the mean preference scores of the CLC-projected Llull matrix as well as for the CLC-Zermelo fractions (except for the ties between options with vanishing fractions).


\begin{theorem}\hskip.5em
\label{st:inversion-thm}
Assume that all of the binary preferences are reversed, \ie the scores $(v_{xy})$ are replaced by $(\vbis_{xy}),$ where $\vbis_{xy} = v_{yx}$. Let $\rho_x$ and $\rhobis_x$ be the mean preference scores of the respective CLC-projected Llull matrices. They behave in the following way: $\rho_x \,>\, \rho_y \,\Longrightarrow\,\rhobis_x \,<\, \rhobis_y$.
\end{theorem}

\begin{proof}\hskip.5em
Let us begin by noticing that the respective indirect scores satisfy
\begin{equation}
\label{eq:inversion-indirect}
\vbis^*_{xy} = v^*_{yx},
\end{equation}
which is clear from (\ref{eq:paths}).

Consider now the respective CLC-projected Llull matrices $(\psc_{xy})$ and $(\pscbis_{xy}).$
In \cite[\secpar{2.4}]{crc} and \cite[\secpar{2.1}, Step 2]{cri} it is seen that the admissible orders for $(\psc_{xy})$ are characterized as follows: $xy\in\xi$ if and only if $v^*_{xy} \ge v^*_{yx}$. Let us fix such an order~$\xi$ and let $\xibis$ be its reverse. Using (\ref{eq:inversion-indirect}), the preceding double implication can be rewritten as\, $yx\in\xibis$ if and only if $\vbis^*_{yx} \ge \vbis^*_{xy},$ which ensures $\xibis$ to be an admissible order for $(\pscbis_{xy}).$

We will now apply Lemma~\ref{st:obsRosa} to the CLC matrices $(\psc_{xy})$ and $(\pscbis_{xy}).$
Using part~(a) of that lemma ---as well as its contrapositive--- we get the following implication:
$\rho_x > \rho_y \Rightarrow xy\in\xi \Rightarrow yx\in\xibis \Rightarrow \rhobis_y \ge \rhobis_x$.

In order to complete the proof, it suffices to show that $\rhobis_y = \rhobis_x \Leftrightarrow \rho_x = \rho_y$. To this effect, we can restrict ourselves to the case where $y$ immediately follows $x$ in the order $\xi$. In fact, having $\rho_y=\rho_x$ and $x\rxi z\rxi y$ clearly implies $\rho_y=\rho_z=\rho_x$. So we can assume $y=x'$.
By part~(d) of Lemma~\ref{st:obsRosa}, our aim amounts at showing that $\pscbis_{xx'}=\pscbis_{x'x}\Leftrightarrow \psc_{xx'}=\psc_{x'x}$. Now, equations (20), (11) and (7) of \cite{cri} ensure that $\psc_{xx'}-\psc_{x'x} = \min\,\{\, v^*_{pq}-v^*_{qp} \;\vert\; p\rxieq x,\; x'\rxieq q\,\}.$ Using (\ref{eq:inversion-indirect}) as well as the fact that $\xibis$ is the reverse of $\xi,$ the right-hand side transforms into the analogous expression that corresponds to $\pscbis_{x'x}-\pscbis_{xx'}$. So we get $\pscbis_{x'x}-\pscbis_{xx'} = \psc_{xx'}-\psc_{x'x},$ which entails the claimed double implication.
\end{proof}

\remark
In the complete case these arguments together with the formulas for $\psc_{xy}$ given in \cite{crc} show that the CLC projection commutes then with transposition. This commutability is not true in the general incomplete case.

\begin{corollary}\hskip.5em
\label{st:inversion-cor}
In the conditions of the preceding theorem the CLC-Zermelo fractions behave in the following way:
$\flr_x \,>\, \flr_y \,\Longrightarrow\, \hbox{either\, }\flrbis_x < \flrbis_y \hbox{ \,or\, }\flrbis_x = \flrbis_y = 0$.
\end{corollary}

\begin{proof}\hskip.5em
Again, it suffices to combine the preceding result with Theorem~\ref{st:phis} of the preceding section.
\end{proof}

\newcommand\binrel{\eta}

\medskip
The next theorem is concerned with clone consistency.
In this connection we make use of the notion of autonomous sets.
\ensep
A subset $\cst\sbseteq\ist$ is said to be {\df autonomous} for a binary relation~$\binrel$ when
each element from outside~$\cst$ relates to all elements of~$\cst$ in the same way;
more precisely, when, for any~$x\not\in\cst$, having
$ax\in\binrel$ for some $a\in\cst$ implies $bx\in\binrel$ for any
$b\in\cst$, and similarly, having $xa\in\binrel$ for some $a\in\cst$
implies $xb\in\binrel$ for any $b\in\cst$.
\ensep
More generally, a subset $\cst\sbseteq\ist$ will be said to be autonomous for a valued relation $(v_{xy})$ when
the equalities $v_{ax} = v_{bx}$ and $v_{xa} = v_{xb}$ hold whenever
$a,b\in\cst$ and $x\not\in\cst$.
\ensep
For more details about the notion of autonomous set and the property of clone consistency we refer the reader to \cite[\secpar{11}]{crc}.
\ensep
Autonomous sets are also considered in \cite{che:1994}, where they are called macrovertices.

\medskip
\begin{theorem}\hskip.5em
\label{st:clones}
The CLC-Zermelo fractions have the following property of clone consistency: Assume that $\cst\sbset\ist$ is an autonomous set for each of the individual votes. Assume also that either $\cst\sbseteq\xst$ or $\cst\spseteq\ist\setminus\xst$,
where $\xst=\{\,x\in\ist\mid\flr_x>0\}$.
\ensep
Under these hypotheses one has the following facts: \textup{(a)}~$\cst$ is autonomous for the ranking determined by the CLC-Zermelo fractions;\, and\, \textup{(b)}~contracting $\cst$ to a single option in all of the individual votes has no other effect in that ranking than getting the same contraction.
\end{theorem}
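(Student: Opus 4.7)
The plan is to combine the clone consistency of the CLC~projection established in~\cite{crc,cri} with the structural compatibility between strengths and mean preference scores provided by Theorem~\ref{st:phis}. In the following, $(\psc_{xy})$ denotes the CLC~projection of the original Llull matrix.

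For part~(a), the first step is to invoke the clone consistency of the CLC~projection~\cite{cri}, which gives that $\cst$ is autonomous also for the projected Llull matrix $(\psc_{xy})$. Since $(\psc_{xy})$ has CLC~structure (Theorem~\ref{st:characterization}), the next step is to show that $\cst$ is a consecutive block in the $\rho$-ranking of~$(\psc_{xy})$. This is a purely structural claim: if some $y\in\ist\setminus\cst$ had $\rho_y$ strictly between two $\rho$-values of~$\cst$, then by the monotonicity of $\rho$ along any admissible order~$\xi$ (Lemma~\ref{st:obsRosa}.(a)) $y$ would be $\xi$-between two elements of~$\cst$; the autonomy equalities $\psc_{xy}=\psc_{\bar xy}$ and $\psc_{yx}=\psc_{y\bar x}$ combined with the CLC~inequalities~(\ref{eq:vinequalities}) and~(\ref{eq:tinequalities}) then collapse $\rho_y$ to a common value with the relevant elements of~$\cst$, yielding the desired consecutiveness. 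By Theorem~\ref{st:phis} this consecutiveness transfers to the $\flr$-ranking up to ties at zero, and the hypothesis $\cst\sbseteq\xst$ or $\cst\spseteq\ist\setminus\xst$ controls those ties: in the first case every element of~$\cst$ lies in the top dominant component and is strictly ordered by~$\flr$; in the second, $\cst$ contains the entire set $\ist\setminus\xst$ of zero-strength elements together with a consecutive block of~$\xst$, so $\cst$ reaches the bottom of the ranking without external interleaving, with Lemma~\ref{st:obsRosa}.(f) ensuring that no element of $\xst\setminus\cst$ interleaves either.

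For part~(b), I would apply the stronger form of clone consistency of the CLC~projection: projecting the contracted individual votes coincides with contracting~$\cst$ in the projected matrix~$(\psc_{xy})$. The contracted matrix $(\pscbis_{xy})$ inherits CLC~structure, with an admissible order obtained from~$\xi$ by replacing the block~$\cst$ with a single element~$c$. Applying part~(a) and Theorem~\ref{st:phis} to~$(\pscbis_{xy})$ shows that its $\flrbis$-ranking is obtained from the $\flr$-ranking of~$(\psc_{xy})$ by collapsing~$\cst$ to~$c$ at its original position, which is the claimed conclusion.

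The main obstacle I anticipate is the structural step showing that autonomy of~$\cst$ in~$(\psc_{xy})$ forces $\cst$ to be a consecutive block in the $\rho$-ranking, since this requires a delicate interplay between the asymmetric CLC~inequalities and the symmetric autonomy equalities to rule out strict interleaving by external elements. Once this is in hand, the remaining steps amount to a careful case analysis distinguishing the two branches of the hypothesis $\cst\sbseteq\xst$ or $\cst\spseteq\ist\setminus\xst$, together with the already-established clone consistency of the CLC~projection.
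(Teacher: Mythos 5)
Your proposal is correct and follows essentially the same route as the paper, whose entire proof is to combine the clone-consistency theorem for the CLC projection (\cite[Thm.\,8.2]{cri}) with Theorem~\ref{st:phis}. The ``structural step'' you flag as the main obstacle --- that autonomy of $\cst$ for the projected matrix makes $\cst$ a consecutive block of the $\rho$-ranking, and that contraction commutes with projection --- is precisely the content of the cited theorem, so the paper does not re-derive it; your handling of the two branches of the hypothesis on $\cst$ via Theorem~\ref{st:phis} and Lemma~\ref{st:obsRosa}.(f) matches what the paper's one-line combination implicitly requires.
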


\begin{proof}\hskip.5em
Once more, it suffices again to combine \cite[Thm.\,8.2]{cri} with Theorem~\ref{st:phis} of the preceding section.
\end{proof}

\medskip
Finally, the following 
result considers the effect of raising a particular
option $a$ to a more preferred status in the individual ballots
\textit{without any change in the preferences about the other options}.

\smallskip
\begin{theorem}\hskip.5em
\label{st:monotonicity}
Assume that the scores~$v_{xy}$ are modified into new values $\vbis_{xy}$ such that
\begin{equation}
\label{eq:mona}
\vbis_{ay} \ge v_{ay},\quad \vbis_{xa} \le v_{xa},\quad \vbis_{xy} = v_{xy},\qquad \forall x,y\neq a.
\end{equation}
In these circumstances the CLC-Zermelo fractions behave in the following way:
$\flr_a > \flr_y \,\Longrightarrow\, \flrbis_a \ge \flrbis_y$.
\end{theorem}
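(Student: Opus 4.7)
The plan is to translate between strengths and mean preference scores using Theorem~\ref{st:phis}, and then rely on a monotonicity property of the CLC projection composed with mean preference scores, which should be available as a companion to~\cite[Thm.~8.1]{cri} used in the proof of Theorem~\ref{st:majority}.

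First, since the projected Llull matrix $(\psc_{xy})$ has CLC structure by Theorem~\ref{st:characterization}, Theorem~\ref{st:phis}(a) applies: the hypothesis $\flr_a > \flr_y$ implies $\rho_a > \rho_y$, where $\rho_x$ denotes the mean preference score of~$x$ computed from the projected scores~$\psc_{xy}$.

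Next, I would invoke the corresponding monotonicity property for the projected mean preference scores: under a modification of type~(\ref{eq:mona}) of the original Llull matrix, one has $\rhobis_a \ge \rhobis_y$, where $\rhobis_x$ refers to the mean preference scores of the perturbed projected matrix $(\pscbis_{xy})$. Via $\arank{x} = N - (N-1)\rho_x$, this is the statement that the rank of~$a$ does not drop below that of~$y$ after the perturbation.

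Finally, I would apply Theorem~\ref{st:phis} to $(\pscbis_{xy})$, which again has CLC structure. If $\rhobis_a > \rhobis_y$, part~(b) yields $\flrbis_a \ge \flrbis_y$ directly (either $\flrbis_a > \flrbis_y$, or $\flrbis_a = \flrbis_y = 0$). If instead $\rhobis_a = \rhobis_y$, then the contrapositive of part~(a), applied once in each direction, forces $\flrbis_a = \flrbis_y$, so the conclusion $\flrbis_a \ge \flrbis_y$ holds in both cases.

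The main obstacle is the monotonicity step. The chain of inequalities~(\ref{eq:mona}) is not preserved verbatim by the CLC projection, since that procedure involves nonlinear operations such as taking maxima along paths; consequently, the required monotonicity of the $\rho_x$ rests on finer structural properties of the projection established in~\cite{cri}. Once that monotonicity is in hand, the translation between strengths and mean preference scores via Theorem~\ref{st:phis} is a routine case analysis.
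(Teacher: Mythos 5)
Your proposal is correct and follows essentially the same route as the paper: the paper's proof cites exactly the monotonicity result you anticipate (it is \cite[Thm.\,8.3]{cri}, stating $\rho_a > \rho_y \Rightarrow \rhobis_a \ge \rhobis_y$ for the projected mean preference scores) and then combines it with Theorem~\ref{st:phis}. Your concluding case analysis via parts (a) and (b) of Theorem~\ref{st:phis} is precisely the "combination" the paper leaves implicit.
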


\begin{proof}\hskip.5em
According to \cite[Thm.\,8.4]{cri},
the mean ranks $R_x$ of the projected Llull matrix $(\psc_{xy})$ behave in the following way:
$R_a < R_y \,\Longrightarrow\, \widetilde R_a \le \widetilde R_y$.
In~terms of the mean preference scores $\rho_x$,
which are related to the mean ranks $\arank{x}= R_x$ by the linear decreasing transformation~(\ref{eq:arank}),
we have therefore $\rho_a > \rho_y \,\Longrightarrow\, \rhobis_a \ge \rhobis_y$.
So, it suffices to combine this with Theorem~\ref{st:phis} of the present article.
\end{proof}

\section{Concluding remarks}

\newcommand\ev{\tau}
\newcommand\fb{\psi}
\newcommand\is{\chi}

\paragraph{5.1}
In this subsection we look at the possibility of achieving the same properties by means of other methods.

A classical idea to be considered in this connection is rating the options by means of a \emph{non-negative right eigenvector} of the paired-comparison matrix. This approach arises naturally as a refinement of the mean preference scores. In~fact, the mean preference score of $x$ combines the preference scores $v_{xy}\ (y\neq x)$ with equal weights. However, one can argue that a given preference score over a highly rated option $y$ should convey more value to~$x$ than the same preference score over a lowly rated option $y$. This leads to looking for a system of ratings $(\ev_x)$  that satisfy a relationship of the form $\sum_{y\neq x} v_{xy}\,\ev_y = \lambda\,\ev_x$ for some $\lambda > 0$. In conformity with the idea of mixing proportions, one requires also $\ev_x\ge0$ and $\sum_{x} \ev_x = 1$. In other words, $(\ev_x)$ should be a non-negative right eigenvector of the matrix that is obtained from $(v_{xy})$ by filling the diagonal with zeroes, and the corresponding eigenvalue should be positive. 
One can look for such an eigenvector by solving the preceding equations in a direct way.
Alternatively, one can often approach it by an iterative procedure of the form $\ev^{(n+1)}_x = \sum_{y\neq x} v_{xy}\,\ev^{(n)}_y$
starting from a positive vector $\ev^{(0)}$. Usually one takes $\ev^{(0)}_x = 1/N$ for all $x$ (recall that $N$ is the number of options) in which case the $\ev^{(1)}_x$ are proportional to the mean preference scores.

This idea was put forward in 1895 by Edmund Landau in his first published mathematical paper \cite{la:1895}. Landau was motivated by chess tournaments, where some rating methods had been introduced that amounted to using the rating $\ev^{(2)}$.
He returned to the subject in 1914 \cite{la:1914}, after Oskar Perron and Georg Frobenius had proven
their celebrated theorem that guarantees the existence and uniqueness of such a non-negative eigenvector in the case of an irreducible non-negative matrix.
Forty years later, the same idea was proposed by
T.\,H.~Wei and Maurice G.~Kendall \cite{wei:1952,kendall:1955}.%
\footnote{We have not been able to check reference \cite{wei:1952}.}

In his second paper on the subject, Landau considered the condition of unanimous decomposition,
more specifically its part~(a) as stated in page~\pageref{txt:ud}, 
as~a~natural constraint for selecting among the several non-negative eigenvectors that can exist in the event of an unanimous decomposition \cite[p.\,201]{la:1914}. 
However, one can see that this constraint conflicts with the condition of continuity that we would like also to be satisfied.
Let us take, for instance, the following matrix:
\renewcommand\labelcell[1]{\cellcolor[gray]{0.8}\makebox[1.6em][c]{\skl{#1}}}
\begin{equation}
\label{eq:redeps}
\vaa_\epsilon \,=\,  
\begin{small}
\begin{tabular}{|c|c|c|}
\hlinestrut
\labelcell{a}&$1\cd-\epsilon$&$1\cd-\epsilon$\\
\hlinestrut
$\epsilon$&\labelcell{b}&\onehalf\\
\hlinestrut
$\epsilon$&\onehalf&\labelcell{c}\\
\hline
\end{tabular}
\end{small}
\end{equation}
For $\epsilon > 0$ (and less than $1$) its unique non-negative eigenvector (unique up to multiplication by a positive number) is $\big(\,8(1-\epsilon)(1+\sqrt{1\cd+32\epsilon\cd-32\epsilon^2}\,)^{-1},1,1\big)^{\textsf{T}}\!,$ whose limit as $\epsilon\downarrow0$ is $(4,1,1)^{\textsf{T}}$. However, this is not a multiple of~$(1,0,0)^{\textsf{T}},$ the rating that the condition of unanimous decomposition  requires for $\epsilon=0$.
(Notice that both $(4,1,1)^{\textsf{T}}$ and $(1,0,0)^{\textsf{T}}$ are non-negative eigenvectors of $\vaa_0$ and that their corresponding eigenvalues are respectively $\onehalf$ and $0$.)




\medskip
This and other problems seem to disappear for a method that can be viewed as a derivation of the preceding one, namely the so-called \emph{fair bets} method, proposed more or less independently by Henry E.~Daniels in 1969~\cite{da} and by John W.~Moon and Norman J.~Pullman in 1970~\cite{mp}.

The fair~bets, that we will denote by $\fb_x$, have the following meaning:
Let us interpret the paired-comparison scores $v_{xy}$ as numbers of victories of $x$ over $y$.
We will assume that every time that a player $x$ beats another one~$y$, the latter pays to the former the amount $\fb_y$. The fair bets are the values that result in no player winning nor losing any money.
In other words, for every~$x$ one should have the equality $\sum_{y\ne x} v_{xy} \fb_y = \sum_{y\ne x} v_{yx} \fb_x$. As before, together with these equations one requires also $\fb_x\ge0$ and $\sum_x\fb_x = 1.$



The fair bets are easily seen to have good behaviour in connection with the conditions of single-choice voting consistency and unanimous decomposition. A preliminary exploration suggests that they also depend continuously on the preference scores even in the neighbourhood of a reducible matrix. For instance, in the case of (\ref{eq:redeps}) they are proportional to $(1-\epsilon,\epsilon,\epsilon)^{\textsf{T}}.$


On the other hand, they do not satisfy the Condorcet principle (in common with the mean preference scores and Zermelo's strengths). For instance, in the case of (\ref{eq:ex1}--\ref{eq:llull1}) 
one gets the following values: \cand{a}: 0.323, \cand{b}: 0.378, \cand{c}: 0.174 \cand{d}: 0.124,
where \cand{b} gets the largest fraction in spite of the fact that \cand{a} has a majority of first placings.

However, the experience of this article on Zermelo's method suggests that
combining the CLC~projection with the fair-bets method
could also give a method with the desired properties,
including the Condorcet-Smith principle.
By the way, in the case of (\ref{eq:ex1}--\ref{eq:llull1}), this combined procedure gives the following results: \cand{a}:~0.325, \cand{b}:~0.286, \cand{c}:~0.214 \cand{d}:~0.175.
So we pose the two following questions:


\begin{open}\hskip.5em
Are the fair bets continuous functions of the preference scores even in the neighbourhood of a reducible matrix?
\end{open}

\begin{open}\hskip.5em
Is the Condorcet-Smith principle satisfied when the fair bets are preceded by the CLC~projection?
\end{open}

In contrast to Zermelo's method, the fair bets are known to violate the condition of inversion \cite[Example~4.4]{gonz}. However, numerical experiments suggest that
the following question may still have a positive answer:


\begin{open}\hskip.5em
Do the fair bets have the property of inversion when they are preceded by the CLC~projection?
\end{open}


%



\paragraph{5.2}
In Theorem~\ref{st:monotonicity} we considered the effect of raising a particular
option $a$ to a more preferred status in the individual ballots
\textit{without any change in the preferences about the other options}.
Besides the property that was obtained in that theorem,
in this situation it would be quite desirable to have an increase in the fraction associated with $a:$
$\flrbis_a \ge \flr_a.$
This condition of \dfc{quantitative monotonicity} was considered by Landau in \cite{la:1914},
where it is shown 
that this condition is violated by the right non-negative eigenvector even in the irreducible case.

Zermelo's method is ensured to have this property \cite[p.\,444]{ze}.
However, this is not true for Zermelo's method preceded by the CLC projection,
since the latter does not have good properties in this connection
(which motivated the open question~1 of \cite{crc}).

\end{document}